\newtheorem{definition}{Definition}
\newtheorem{lemma}[definition]{Lemma}
\newtheorem{theorem}[definition]{Theorem}
\newtheorem{corollary}[definition]{Corollary}
\theoremstyle{remark}
\newtheorem{example}[definition]{Example}
\def\NN{\mathbb{N}}
\def\QQ{\mathbb{Q}}
\def\RR{\mathbb{R}}
\def\Types{{\bf T}}
\def\BB{\mathbb{B}}
\newcommand{\True}{0}
\newcommand{\False}{1}
\newcommand{\PLomega}{{\sf PL}^\omega}
\newcommand{\PAomega}{{\sf PA}^\omega}
\newcommand{\PRA}{{\sf PRA}}
\newcommand{\FC}{{\sf FC}}
\newcommand{\DC}{{\sf DC}}
\newcommand{\AC}{{\sf AC}_0}
\newcommand{\ACPi}[1]{\Pi_{#1}\mbox{-}\AC}
\newcommand{\CASig}{\Sigma_1\mbox{-}{\sf CA}}
\newcommand{\DP}{{\sf DP}}
\newcommand{\WKL}{{\sf WKL}}
\newcommand{\BW}{{\sf BW}}
\newcommand{\systemT}{{\rm T}}
\newcommand{\SBR}{{\sf SBR}}
\newcommand{\nexts}{{\rm next}}
\newcommand{\cZero}{{\bf 0}}
\newcommand{\emptyseq}{\langle \, \rangle}
\newcommand{\eqleft}[1]{\begin{itemize} \item[] $#1$ \end{itemize}}
\newcommand{\pair}[1]{\langle #1 \rangle}
\newcommand{\initSeg}[2]{[#1](#2)}
\newcommand{\ext}[1]{\widehat{#1}}
\newcommand{\selEmb}[1]{\overline{#1}}
\newcommand{\nd}[3]{|{#1}|^{#2}_{#3}}
\newcommand{\nt}[1]{{#1}^N}
\newcommand{\pred}{{\; \varphi}}
\newcommand{\negpred}{{\; \neg\varphi}}
\newcommand{\mt}{T}
\newcommand{\Rec}[3]{{\sf R}_{#1}(#2)(#3)}
\newcommand{\Prod}[3]{{\sf EPS}_{#1}^{#2}({#3})}
\newcommand{\Prods}{{\sf EPS}}
\begin{document}

\title{A Game-Theoretic Computational Interpretation of Proofs in Classical Analysis}

\author{Paulo Oliva and Thomas Powell}

\date{}

\maketitle

\begin{abstract} It is shown in \cite{EO(2010A),EO(2011A)} that a functional interpretation of proofs in mathematical analysis can be given by the product of selection functions, a mode of recursion that has an intuitive reading in terms of the computation of optimal strategies in sequential games. We argue that this result has genuine practical value by interpreting some well-known theorems of mathematics and demonstrating that the product gives these theorems a natural computational interpretation that can be clearly understood in game theoretic terms.
\end{abstract}

\section{Introduction}
\label{sec-intro}

Over the last century, mathematicians and computer scientists have become increasingly interested in understanding the computational content of mathematical proofs.

A central feature of modern mathematics is the use of non-constructive methods that allow us to reason about infinitary objects without providing any computational justification. In the 1920's Hilbert's program broadly addressed the task of understanding non-constructive mathematics in computational terms, which led to the development of important proof-theoretic techniques such as cut-elimination, the $\varepsilon$-method, and proof interpretations. These were used to obtain significant foundational results such as relative consistency proofs for arithmetic and analysis.

In recent decades these metamathematical devices whose roots lie in foundational problems have been employed more directly towards the extraction of programs from non-constructive proofs. This shift of emphasis has its origins in the fundamental work of Kreisel on the `unwinding' of proofs \cite{Kreisel(51), Kreisel(52)}, and has now become the focus of a considerable amount of research in logic and computer science.

Among the most effective tools for extracting constructive information from proofs are the \textit{proof interpretations}, which include Freidman's A-translation \cite{Friedman(78)} and G\"{o}del's dialectica interpretation \cite{Goedel(58)}. The latter in particular is central to the highly successful \textit{proof mining} program (see Kohlenbach \cite{Kohlenbach(2008)}), in which the analysis of proofs using its monotone variant has led to the development of quite general meta-theorems guaranteeing the extraction of effective uniform bounds from theorems in analysis. 

While proof interpretations have been widely applied in logic and computer science, the \textit{qualitative (computational) behaviour} of their output has received relatively little attention. Indeed, the operational semantics of programs extracted from even relatively simple classical proofs are often very difficult to understand. This is mainly due to two factors:
\begin{enumerate}
	\item {\bf Higher-order computation}. Even when computing a witness of type $\NN$ the constructions involved will work on much higher types, usually types $1$ and $2$. G\"odel's primitive recursor itself even for the lowest type is already an object of type $2$.
	\item {\bf Syntactic nature of proof interpretations and translations}. Extracted programs tend to be hidden beneath a complex layer of syntax that generally accompanies formal translations on proofs. 
\end{enumerate}
Moreover, relatively little work has gone into addressing these issues because more often than not proof interpretations are a means to an end -- be it a consistency proof or the extraction of a uniform bound -- and a qualitative understanding of their output is simply irrelevant.

Nevertheless, the idea of stripping functional interpretations of their syntax and appreciating how they work from a \textit{mathematical} perspective is an interesting one. It has been observed by Gaspar and Kohlenbach \cite{Gaspar(2010),Kohlenbach(2008)} that the kind of logical manipulations carried out by the dialectica interpretation is closely related to the so-called \emph{correspondence principle} between `soft' and `hard' analysis discussed by T. Tao in \cite{Tao(2007),Tao(2008)}. In this sense one could potentially view functional interpretations as devices that transform classical proofs into constructive proofs of a `finitized' form of the original theorem, although actually translating their output into what a mathematician would consider a proof seems far from straightforward.

In recent work \cite{EO(2010A),EO(2011A),OP(2011B)} the authors and M. Escard{\'o} have sought to better understand programs extracted by the dialectica interpretation. Here it is shown that the dialectica interpretation of the key combination of classical logic and countable choice can be realized by the \emph{product of selection functions} (as opposed to the usual bar recursion of Spector \cite{Spector(62)}), an intuitive mode of computation that can be understood as computing optimal strategies in a class of sequential games.

Consider, for instance, an $\exists \forall$-theorem $\exists x^X \forall y^Y A(x, y)$, with $A(x, y)$ a decidable predicate, and $x$ and $y$ having types $X$ and $Y$ respectively. We shall think of $X$ as a set of available \emph{moves}, and $Y$ as the set of possible \emph{outcomes} of a game. The predicate $A(x, y)$ is then understood as prescribing what are the good outcomes $y$ given any particular move $x$. The theorem then says that there exists a single move for which all possible outcomes are considered good. Now, if the theorem has been proven classically, such a move will be shown to exist but it might not be effectively computable. What we should do then is to consider the `constructive' equivalent of the theorem via the negative translation, namely $\neg \neg \exists x^X \forall y^Y A(x, y)$. This can also be put in the form $\exists \forall$, and in fact that is precisely what the dialectica interpretation does. In this case we would obtain the (classically) equivalent theorem
\[ \exists \varepsilon^{(X \to Y) \to X} \forall p^{X \to Y} A(\varepsilon p, p(\varepsilon p)). \]
Although $x \colon X$ might not be effectively computable, it turns out that the \emph{selection function} $\varepsilon \colon (X \to Y) \to X$ is. Moreover, we can extend our game-theoretic reading and view $p \colon X \to Y$ as a mapping from moves to outcomes. What the selection function $\varepsilon$ does is to pick, for any given such mapping $p$,  a move $x = \varepsilon p$ whose corresponding outcome according to $p$, namely $y = p x$ is a good outcome for $x$. 

Now suppose we are given a countable family of $\forall\exists$-predicates $\exists x\forall yA_n(x,y)$ interpreted by a sequence of selection functions $(\varepsilon_n)$. By classical countable choice there exists a sequence $f\colon\NN\to X$ satisfying $\forall n, y A_n(fn, y)$. The dialectica interpretation of $\neg\neg \exists f\forall n,y A_n(fn,y)$ states that for any given functions $q \colon X^\NN \to Y$ and $\omega \colon X^\NN \to \NN$ there exists a functional $f$ such that  
\[ \forall n \leq \omega f \, A_n(f  n, q f). \]
Therefore, thinking of each $A_n(x, y)$ as prescribing the ``good" pairs of move-outcome for round $n$, the task above corresponds in  finding a sequence of moves $f$ which leads to an outcome $y = q f$ that is considered good at all rounds up to point $\omega f$. We will see that the product of the selection functions $(\varepsilon_n)$ calculates such $f$, and this construction can be viewed as the calculation of an optimal strategy in a sequential game whose ``goal" at round $n$ -- given by the selection function $\varepsilon_n$ -- is to pick a move with a good outcome for the predicate $A_n$. 

The aim of this article is to demonstrate that, \emph{in practise}, program extraction using the product of selection of functions and its game-theoretic semantics leads to a much better appreciation of the constructive content of proofs in analysis. We illustrate this using a number of well known classical theorems, many of which have been extensively analysed by proof theorists. In particular we include a detailed analysis of a proof of the Bolzano-Weierstrass theorem, the interpretation of which is by no means trivial, but from which we are nevertheless able to extract a program that can be given a clear description in the language of sequential games.

In the course of the paper our aim is to portray the dialectica interpretation as an intelligent translation whose output can be read and understood in mathematical terms. As such we endeavour to phrase the higher type functionals that arise from the interpretation using a more informal vocabulary. This approach owes a lot to the aforementioned work by Gaspar and Kohlenbach, and it is hoped that our work will complement theirs in forming another small step towards understanding the mathematical significance of proof interpretations.  

\subsection{Preliminaries}
\label{subsec-prelim}

We work in the language of Peano arithmetic in all finite types $\PAomega$. The finite types $\Types$ contain a basic type $\NN$ and whenever $X, Y \in \Types$ then $X \to Y \in \Types$, i.e.
\eqleft{\Types = \{\NN,\NN\rightarrow\NN,(\NN\rightarrow\NN)\rightarrow\NN,\ldots\}.}
Closely related to $\PAomega$ is G\"{o}del's system $\systemT$ of primitive recursive functionals of finite type. This quantifier-free calculus is essentially primitive recursive arithmetic $\PRA$ with the schema of recursion extended to all types $X \in \Types$, i.e.
\begin{equation} \label{t-rec}
\Rec{n}{h}{g} \stackrel{X}{=}
\left\{
\begin{array}{ll}
	g & {\rm if } \; n = 0 \\[2mm]
	h_n(\Rec{n-1}{h}{g}) & {\rm if} \; n > 0.
\end{array}
\right.
\end{equation}
For full details of these theories the reader is referred to \cite{Avigad(98)}. We make informal use of types like $\BB$, $\QQ$, finite sequences types $X^\ast$ etc. as elements of these types can be encoded as elements of a suitable type in $\Types$. \\[2mm]
{\bf Notation}. We make use of the following abbreviations:
\begin{description}
\item[]$0_X$ is the obvious canonical zero of type $X$.
\item[]$s\ast t$ is the concatenation of sequences $s$ and $t$.
\item[]$s \preceq t$ for ``$s$ is a prefix of $t$".
\item[]$\ext{s} \equiv s * \cZero^{X^\NN}$ the canonical infinite extension of a finite sequence $s$.
\item[]$\initSeg{\alpha}{n}\equiv\pair{\alpha 0,\ldots,\alpha (n-1)}$ is the initial segment of $\alpha$ of length $n$.
\item[]$\mu n\leq N \; . \; P(n)$ is the bounded search operator that returns the least $n\leq N$ satisfying the decidable predicate $P(n)$ if one exists, or $N$ otherwise.
\end{description}

\subsection{The dialectica interpretation}

We assume that the reader is familiar with G\"{o}del's dialectica interpretation (details of which are covered in full in \cite{Avigad(98),Kohlenbach(2008)}), although we recall below a few basic facts to familiarise the reader with our notation and terminology.

The dialectica interpretation maps formulas $A$ of some specified theory $\mathcal{S}$ to a decidable binary relation $\nd{A}{x}{y}$ definable in a specified quantifier-free system of functionals ${\sf F}$. The canonical instance of this mapping is when $\mathcal{S}$ is Heyting arithmetic in all finite types and ${\sf F}$ is G\"{o}del's system $\systemT$.

In $\nd{A}{x}{y}$ we have that $x$ and $y$ stand for (possibly empty) tuples of objects of finite type. We think of  $x$ as the \textit{witnessing} variables and $y$ as the \textit{challenge} variables. The intuition is that $A$ is logically equivalent to $\exists x\forall y\nd{A}{x}{y}$. The translation is formally defined as follows:

\begin{definition}[G\"odel's dialectica interpretation] \label{dialectica} For atomic formulas $P$ we set $\nd{P}{}{}:=P$, with $x$ and $y$ both empty tuples. Assuming that we have already defined $\nd{A}{x}{y}$ and $\nd{B}{u}{v}$, we define
\eqleft{
\begin{array}{lcl}
\nd{A\wedge B}{x,v}{y,w} & := & \nd{A}{x}{y}\wedge\nd{B}{v}{w} \\[2mm]
\nd{A\vee B}{x,v,b}{y,w} & := & (b=\True \wedge\nd{A}{x}{y})\vee(b = \False \wedge\nd{A}{v}{w}) \\[2mm]
\nd{A\to B}{f,g}{x,w} & := & \nd{A}{x}{gxw}\to\nd{B}{fx}{w} \\[2mm]
\nd{\forall zA(z)}{f}{y,z} & := & \nd{A(z)}{fz}{y} \\[2mm]
\nd{\exists zA(z)}{x,z}{y} & := & \nd{A(z)}{x}{y}.
\end{array}
}
We say that $\mathcal{S}$ is (dialectica) interpreted in $\sf F$ if whenever $\mathcal{S}\vdash A$ we can construct some $t\in\sf{F}$ such that $\sf{F} \vdash \nd{A}{t}{y}$.
\end{definition}

In order to interpret \textit{classical} theories, the dialectica interpretation is typically composed with a negative translation\footnote{As in \cite{Kohlenbach(2008)} we adopt Kuroda's variant of the negative translation.} $N$ to form the so-called ND interpretation. In the remainder of the paper, by \emph{functional interpretation} we specifically mean the ND interpretation. A classical theory $\mathcal{T}$ has a functional interpretation in $\sf F$ if whenever $\mathcal{T}\vdash A$ we can construct some $t\in\sf{F}$ satisfying $\forall y\nd{\nt{A}}{t}{y}$.

In his original paper on the dialectica interpretation, G\"{o}del proved that Peano arithmetic has a functional interpretation in the primitive recursive functionals of finite type $\systemT$. Later, Spector extended G\"{o}del's result to classical analysis by realizing the dialectica interpretation of the negative translation of the axiom of countable choice $\AC$ with a novel, but rather abstruse form of recursion called \emph{bar recursion} $\SBR$.

\begin{theorem}\label{thm-soundness} The following are well-known:

\begin{itemize}

\item[(a)]$\PAomega$ has a functional interpretation in $\systemT$ \emph{(G\"{o}del \cite{Goedel(58)})}.

\item[(b)]$\PAomega+\AC$ has a functional interpretation in $\systemT+\SBR$ \emph{(Spector \cite{Spector(62)})}.

\end{itemize}\end{theorem}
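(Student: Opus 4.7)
The plan is to establish both parts by induction on the length of derivations, verifying that each axiom and inference rule of the source theory is interpreted by an appropriate term in the target functional calculus. Since $\PAomega$ and $\PAomega+\AC$ are classical, I would first compose the Dialectica interpretation with Kuroda's negative translation $N$; this reduces the task to interpreting $\WEHAomega$ (respectively $\WEHAomega+\AC$) intuitionistically. Writing $\nt{A}$ for the translated formula, one obtains an $\exists\forall$-formula whose matrix is decidable in ${\sf F}$, and constructs by induction on the derivation terms $t$ satisfying $\forall y\, \nd{\nt{A}}{t}{y}$.

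For part (a), the induction proceeds axiom-by-axiom. Quantifier-free axioms are self-interpreting, and the logical axioms of intuitionistic predicate calculus are handled by explicit $\lambda$-terms producing the witness transformations and challenge functionals dictated by Definition~\ref{dialectica}. The critical case is the induction schema, which is interpreted using G\"odel's primitive recursor $\Rec{n}{h}{g}$: given a witness $t_0$ for the base case and a witness-transformer $t_{\text{step}}$ for the inductive step, the recursor iterates $t_{\text{step}}$ to produce the required witness at argument $n$. A subtlety arises for contraction $A\to A\wedge A$, whose interpretation demands choosing between two challenges for $A$; this is resolved using the decidability of the quantifier-free matrix, permitting a definition-by-cases available in $\systemT$. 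The weak extensionality rule of $\WEHAomega$ is needed precisely because the Dialectica interpretation does not validate full extensionality.

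For part (b), the extra task is to interpret the negative translation of countable choice. One shows that the $ND$-interpretation of $\AC$ is, modulo the base theory, equivalent to the Dialectica of the double-negation shift $\DNS\colon \forall n\neg\neg A(n)\to\neg\neg\forall n A(n)$. Functionally this asks: given a sequence $(\varepsilon_n)$ of selection-function-like data realising each $\neg\neg A_n$, produce a single $f\colon\NN\to X$ such that for an external challenge $(q,\omega)$ the pair $(f,qf)$ is good at every round $n\leq \omega f$. Spector's bar recursion $\SBR$ furnishes exactly such a functional by recursing along finite approximations of $f$ and using $\omega$ to determine how far the recursion must unfold. I would then verify that the bar recursive definition is well-typed in $\systemT+\SBR$ and that the resulting term satisfies $\forall y\, \nd{\nt{(\AC)}}{t}{y}$.

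The principal obstacle is this last point for $\SBR$: showing that bar recursion actually realises the Dialectica of $\DNS$. The verification requires reasoning classically about an ``ideal'' infinite branch determined by $\omega$; along this branch the selection functions produce the required witnesses, and bar recursion extracts a computable finite approximation that suffices for the particular outer challenge. Termination is ensured because any continuous $\omega$ is fixed by a finite initial segment, which supplies the bar. This is exactly the step at which the game-theoretic reading of selection functions, developed in the body of the paper, becomes indispensable for intuition, since the bar recursive construction can be understood as the backward-induction computation of an optimal strategy in a sequential game of length $\omega f$.
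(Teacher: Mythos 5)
Your outline is correct and matches the standard argument behind the results the paper simply cites to G\"odel and Spector: the paper offers no proof of Theorem~\ref{thm-soundness}, treating it as well known, and your sketch (negative translation followed by Dialectica, induction on derivations with the recursor handling induction and decidability handling contraction, and $\SBR$ realising the Dialectica interpretation of $\DNS$ to obtain $\AC$) is precisely the proof found in the cited sources. The only point worth flagging is that your termination argument via continuity of $\omega$ implicitly restricts to a model satisfying Spector's condition (continuous or majorizable functionals), which is exactly the caveat the paper itself makes when discussing totality of the unbounded product.
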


The main purpose of this paper is to show that these soundness theorems can be reformulated in terms of the product of selection functions, and that this reformulation is better suited towards understanding the behaviour of programs extracted by the dialectica interpretation.

\subsection{Outline of article}
\label{subsec-outline}

We begin in Section \ref{sec-selection} by introducing the product of selection functions and showing that it can be characterised as an operation that computes optimal strategies in sequential games.

In the main part of the paper we then discuss how the language of selection functions is well suited to capturing the way in which the dialectica interpretation works, and in particular the product of selection functions directly interprets countable choice.

We then present a short case study (Section \ref{sec-bolzano}) in which we extract a program from a proof of the Bolzano-Weierstrass theorem via the product of selection functions and demonstrate that our program has a clear game-theoretic semantics.

We conclude by briefly discussing some of the problems we face in gaining a more intuitive understanding of functional interpretations, and outline some potential directions for further research.

\section{Selection Functions and Sequential Games}
\label{sec-selection}

This section constitutes a brief overview of work that is presented in full elsewhere, e.g. the reader is referred to the original paper \cite{EO(2009)} or a recent survey \cite{EO(2011A)} for a more detailed treatment. 

A \emph{selection function} is defined to be any element of type $(X\to R)\to X$ (as in \cite{EO(2009)} we abbreviate this type to $J_RX$). Closely related to a selection function $\varepsilon\colon J_RX$ is its corresponding \emph{quantifier} $\bar\varepsilon\colon (X\to R)\to R$ defined by $\bar\varepsilon p:=p(\varepsilon p)$. The intuition is to view $\varepsilon$ as a selector that given a function $p\colon X\to R$ picks a particular element of $\varepsilon p$ of $X$ that \emph{attains} its quantifier $\bar\varepsilon$, as the following examples illustrate. 
\begin{example}\label{ex-selection}\begin{itemize}

\item[(a)] The canonical example of a selection function and its associated quantifier is when $R$ forms a set of truth values e.g. $R = \BB$. Hilbert's epsilon term of type $X$, $\varepsilon_X\colon J_\BB X$ is a selection function whose corresponding quantifier is just the usual existential quantifier $\exists_X$ for predicates over type $X$, since by definition we have $$\exists x^X \; p(x)\Leftrightarrow p(\varepsilon_Xp).$$

\item[(b)] By the mean value theorem there exists a selection function $\varepsilon\colon J_{[0,1]}\RR$ such that for any continuous function $p\colon[0,1]\to \RR$ we have $$p(\varepsilon p)=\int_0^1 p(x) dx.$$ Its corresponding quantifier is the operator $\int^1_0$. 

\item[(c)] Assume we are given a position in a game where we have to pick a move in $X$. A \emph{strategy} for that position can be defined by a selection function $\varepsilon\colon J_RX$ determining an optimal move for each given mapping $p \colon X \to R$ of possible moves $x \in X$ to corresponding outcomes $p(x) \in R$. 

\end{itemize}\end{example}
The theory of selection functions and quantifiers forms the basis of \cite{EO(2010A),EO(2009),EO(2011A)}. One of the main achievements of these papers has been to define a product operation on selection functions (along with a corresponding operation on quantifiers which we do not discuss further here). They demonstrate that the product of selection functions is an extremely versatile construction that appears naturally in several different areas of mathematics and computer science, such as fixed point theory (Beki\v{c}'s lemma), algorithms (backtracking), game theory (backward induction) and, as we also discuss in Section \ref{sec-main}, proof theory. 

In the remainder of the section we define (following \cite{EO(2009)}) the product of selection functions, and explain how this procedure can be best understood via the computation of optimal strategies in a certain class of sequential games. 

\begin{definition}[Binary product of selection functions \cite{EO(2009)}] Given a selection function $\varepsilon \colon J_RX$ and family of selection functions $\delta_x  \colon J_RY$ and a predicate $q\colon X\times Y\to R$, let
\begin{align*}A[x^X] &\stackrel{Y}{:=} \delta(x, \lambda y.q(x,y)) \\ a &\stackrel{X}{:=} \varepsilon(\lambda x.q(x,A[x])).\end{align*}
The binary product $\varepsilon\otimes\delta$ is another selection function, of type $J_R(X\times Y)$, defined by
\[(\varepsilon\otimes\delta)(q) \stackrel{X\times Y}{:=} \pair{a,A[a]}.\]
If $\delta$ is independent of $x$ we can this the \emph{simple} product of selection functions. The general case is then also called the \emph{dependent} product of selection functions.
\end{definition}

The binary product constructs a composite selection function on the type $X\times Y$ in the obvious way:

\begin{example}\label{ex-product} Continuing from Example \ref{ex-selection} we have: \begin{itemize}

\item[(a)] It is easy to show that the product of $\varepsilon$ operators $\varepsilon_X\otimes\varepsilon_Y$ is an epsilon operator of type $X\times Y$ in the sense that $$\exists x^X\exists y^Y q(x,y)\Leftrightarrow q((\varepsilon_X\otimes\varepsilon_Y)(q)).$$

\item[(b)] Given a continuous function $q\colon [0,1]^2\to\RR$ we have $$q((\varepsilon\otimes\varepsilon)(q))=\int^1_0\int^1_0 q(x,y) \; dxdy.$$

\item[(c)] Given strategies $\varepsilon_0$, $\varepsilon_1$ for each round in a two round sequential game with outcome function $q\colon X_0\times X_1\to R$, then $(\varepsilon_0\otimes\varepsilon_1)(q)$ forms an \emph{strategy} for the game which is ``compatible" with the local strategies $\varepsilon_0$ and $\varepsilon_1$. This key instance of the product is discussed in more detail below.

\end{itemize}\end{example}

As described in \cite{EO(2009)}, we can iterate the binary product of selection functions a finite or an unbounded number of times, where the length of the iteration is dependent on the output of the product in the following sense.

\begin{definition}[Iterated product of selection functions \cite{EO(2009)}] \label{unbounded} Suppose we are given a family of selection functions $(\varepsilon_s\colon J_R X)$. The \textit{explicitly controlled unbounded product} of the selection functions $\varepsilon_s$ is defined by the recursion schema
\begin{equation} \label{Prod}
\Prod{s}{\omega}{\varepsilon}(q) \stackrel{X^{\NN}}{=}
\left\{
\begin{array}{ll}
	{\bf 0} & {\rm if } \; \omega(\ext{s})<|s| \\[2mm]
	(\varepsilon_s \otimes \lambda x . \Prod{s * x}{\omega}{\varepsilon})(q) & {\rm otherwise}
\end{array}
\right.
\end{equation}
where $s \colon X^*$, $q \colon X^{\NN} \to R$ and $\omega \colon X^{\NN} \to \NN$.
\end{definition}

The functional $\omega$ acts as a control, terminating the procedure once it has produced a sequence $s$ satisfying $\omega(\ext{s})<|s|$. The unbounded product is total in any model of bar recursion, which in particular must admit \emph{Spector's condition}:
\begin{equation*}
\forall\omega^{X^\NN\to\NN},\alpha^{X^\NN}\exists n \left( \omega(\widehat{\initSeg{\alpha}{n}})<n \right).
\end{equation*}
These include the models of continuous functionals and the majorizable functionals. On the other hand, when $\omega$ is a constant function, say $\omega \alpha = n$, this corresponds to a finite iteration of the binary product and this restricted instance of the product is definable in the primitive recursive functionals and therefore exists in any model of system $\systemT$.

By unwinding the definition of the binary product in (\ref{Prod}) we obtain an equivalent equation
\begin{equation} \label{Prod-var}
\Prod{s}{\omega}{\varepsilon}(q) \stackrel{X^{\NN}}{=}
\left\{
\begin{array}{ll}
	{\bf 0} & {\rm if } \; \omega(\ext{s})<|s| \\[2mm]
	a_s * \Prod{s * a_s}{\omega}{\varepsilon}(q_{a_s}) & {\rm otherwise}
\end{array}
\right.
\end{equation}
where $a_s = \varepsilon_s(\lambda x . \selEmb{\Prod{s * x}{\omega}{\varepsilon}}(q_x))$ and $q_a(\alpha) = q(a * \alpha)$. 

For fixed $\omega, \varepsilon$ and $q$ one can think of $\Prod{s}{\omega}{\varepsilon}(q)$ as computing an infinite extension to any given finite sequence $s$. The key property of $\Prods$ is that the infinite extension of an initial segment $\initSeg{\alpha}{n}$ of a previous infinite extension $\alpha$ is identical to the original infinite extension. Formally:

\begin{lemma}[cf. \cite{Spector(62)}, lemma 1] \label{main-lemma} Let $\alpha = \Prod{s}{\omega}{\varepsilon}(q)$. For all $n$, 
\begin{equation} \label{main-lemma-eq}
\alpha = \initSeg{\alpha}{n} * \Prod{s * \initSeg{\alpha}{n}}{\omega}{\varepsilon}(q_{\initSeg{\alpha}{n}}).
\end{equation}
\end{lemma}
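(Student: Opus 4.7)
The plan is to prove the identity by induction on $n$, using the unfolded equation (\ref{Prod-var}) for $\Prods$ at each step.

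For the base case $n=0$, we have $\initSeg{\alpha}{0} = \emptyseq$, so $s * \initSeg{\alpha}{0} = s$ and $q_{\initSeg{\alpha}{0}} = q$, hence the right-hand side is just $\emptyseq * \Prod{s}{\omega}{\varepsilon}(q) = \alpha$ by definition of $\alpha$.

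For the induction step, assume $\alpha = \initSeg{\alpha}{n} * \Prod{s * \initSeg{\alpha}{n}}{\omega}{\varepsilon}(q_{\initSeg{\alpha}{n}})$ and split on the guard in the definition of the tail product. In the non-terminating case $\omega(\widehat{s * \initSeg{\alpha}{n}}) \geq |s| + n$, equation (\ref{Prod-var}) gives
\[ \Prod{s * \initSeg{\alpha}{n}}{\omega}{\varepsilon}(q_{\initSeg{\alpha}{n}}) = a * \Prod{s * \initSeg{\alpha}{n} * a}{\omega}{\varepsilon}(q_{\initSeg{\alpha}{n} * a}) \]
where $a = a_{s * \initSeg{\alpha}{n}}$ and I use that $(q_{\initSeg{\alpha}{n}})_a = q_{\initSeg{\alpha}{n} * a}$. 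Reading off the $n$-th coordinate of $\alpha$ from the inductive hypothesis yields $\alpha(n) = a$, so $\initSeg{\alpha}{n+1} = \initSeg{\alpha}{n} * \langle a \rangle$, and substituting back gives precisely the claim at $n+1$.

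The terminating case $\omega(\widehat{s * \initSeg{\alpha}{n}}) < |s| + n$ is where I expect the only subtle point to arise. Here the tail evaluates to $\cZero$, so $\alpha(k) = 0$ for all $k \geq n$; in particular $\alpha(n) = 0$ and $\initSeg{\alpha}{n+1} = \initSeg{\alpha}{n} * \langle 0 \rangle$. The key observation is that the canonical extension is unaffected by appending a zero: $\widehat{s * \initSeg{\alpha}{n+1}} = \widehat{s * \initSeg{\alpha}{n}}$, so $\omega$ returns the same value on both, the guard still fires, and $\Prod{s * \initSeg{\alpha}{n+1}}{\omega}{\varepsilon}(q_{\initSeg{\alpha}{n+1}}) = \cZero$. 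Thus $\initSeg{\alpha}{n+1} * \cZero = \initSeg{\alpha}{n} * \cZero = \alpha$, closing the induction.

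The main obstacle, if any, is being careful with the bookkeeping around the canonical extension $\widehat{\cdot}$ and the $q_{\cdot}$ shift operation, since these must commute correctly with the recursion in (\ref{Prod-var}). Once one checks that $(q_t)_u = q_{t * u}$ and that the $\cZero$-padding of $\widehat{\cdot}$ is idempotent under appending zeros, the induction essentially unfolds one step of the product at a time and the result follows.
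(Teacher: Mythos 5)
Your proof is correct and follows essentially the same route as the paper's: induction on $n$, unfolding one step of (\ref{Prod-var}) in the non-terminating case to read off $\alpha(n)$, and in the terminating case using that appending the zero $\alpha(n)=0$ leaves the canonical extension (and hence the value of $\omega$) unchanged, so the guard fires again at stage $n+1$. Your statement of the guard as $\omega(\widehat{s * \initSeg{\alpha}{n}}) < |s| + n$ is in fact the more careful reading of Definition \ref{unbounded}; the paper abbreviates this to $< n$, but the argument is the same.
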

\begin{proof} By induction on $n$. If $n = 0$ this follows by the definition of $\alpha$. Assume (\ref{main-lemma-eq}) holds for $n$, we wish to show it also holds for $n+1$. Consider two cases. \\[2mm]
(a) If $\omega(s * \initSeg{\alpha}{n} * \cZero) < n$ then $\Prod{s * \initSeg{\alpha}{n}}{\omega}{\varepsilon}(q_{\initSeg{\alpha}{n}}) = \cZero^{X^\NN}$. By induction hypothesis $\alpha = \initSeg{\alpha}{n} * \cZero$, so that $\alpha(n) = \cZero^X$. Therefore $\alpha = \initSeg{\alpha}{n+1} * \cZero$, which, by extensionality, implies $\omega(s * \initSeg{\alpha}{n+1} * \cZero) = \omega(s * \initSeg{\alpha}{n} * \cZero) < n < n+1$. Hence, $\Prod{s * \initSeg{\alpha}{n+1}}{\omega}{\varepsilon}(q_{\initSeg{\alpha}{n+1}}) = \cZero^{X^\NN}$ so that
\eqleft{\initSeg{\alpha}{n+1} * \Prod{s * \initSeg{\alpha}{n+1}}{\omega}{\varepsilon}(q_{\initSeg{\alpha}{n+1}}) = \initSeg{\alpha}{n+1} * \cZero = \initSeg{\alpha}{n} * \cZero = \alpha.}
(b) If $\omega(s * \initSeg{\alpha}{n} * \cZero) \geq n$, then
\[ \alpha \stackrel{\textup{(IH)}}{=} \initSeg{\alpha}{n} * \Prod{s * \initSeg{\alpha}{n}}{\omega}{\varepsilon}(q_{\initSeg{\alpha}{n}}) \stackrel{(\ref{Prod-var})}{=} \initSeg{\alpha}{n} * c * \Prod{s * \initSeg{\alpha}{n} * c}{\omega}{\varepsilon}(q_{\initSeg{\alpha}{n} * c}), \]
where $c = \varepsilon_{s * \initSeg{\alpha}{n}}(\lambda x . \selEmb{\Prod{s * \initSeg{\alpha}{n} * x}{\omega}{\varepsilon}}(q_{s * \initSeg{\alpha}{n} * x}))$. Hence, $\alpha(n) = c$. Therefore
\[ \alpha = \initSeg{\alpha}{n+1} * \Prod{s * \initSeg{\alpha}{n+1}}{\omega}{\varepsilon}(q_{\initSeg{\alpha}{n+1}}). \] 
\end{proof}

This lemma is the main building block behind the proof of the following fundamental theorem about $\Prods$.

\begin{theorem}[Main theorem on $\Prods$] \label{eps-main} Let $q \colon X^\NN \to R$ and $\omega \colon X^\NN \to \NN$ and $\varepsilon_s \colon J_R X$ be given. Define
\eqleft{
\begin{array}{lcl}
\alpha & \stackrel{X^\NN}{=} & \Prod{\emptyseq}{\omega}{\varepsilon}(q) \\[2mm]
p_s(x) & \stackrel{R}{=} & \selEmb{\Prod{s * x}{\omega}{\varepsilon}}(q_{s * x}).
\end{array}
}
For $n \leq \omega(\alpha)$ we have
\begin{equation}\label{eqn-equilibrium}
\begin{array}{lcl}
	\alpha(n) & \stackrel{X}{=} & \varepsilon_{\initSeg{\alpha}{n}}(p_{\initSeg{\alpha}{n}}) \\[2mm]
	q \alpha & \stackrel{R}{=} & \selEmb{\varepsilon_{\initSeg{\alpha}{n}}}(p_{\initSeg{\alpha}{n}}).
\end{array}
\end{equation}
\end{theorem}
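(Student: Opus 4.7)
The plan is to derive both equations from Lemma \ref{main-lemma} applied at $s = \emptyseq$, together with the unfolded recursion (\ref{Prod-var}), by first showing that the assumption $n \leq \omega(\alpha)$ forces the recursion defining $\Prod{\initSeg{\alpha}{n}}{\omega}{\varepsilon}(q_{\initSeg{\alpha}{n}})$ to take its recursive branch rather than its base-case branch.

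First, I would invoke Lemma \ref{main-lemma} with $s = \emptyseq$, which gives $\alpha = \initSeg{\alpha}{n} * \Prod{\initSeg{\alpha}{n}}{\omega}{\varepsilon}(q_{\initSeg{\alpha}{n}})$ for every $n$. The next step is the central observation: the condition $n \leq \omega(\alpha)$ rules out the base case in the definition of $\Prods$ at stage $n$. Indeed, if $\omega(\ext{\initSeg{\alpha}{n}}) < n$ held, then by (\ref{Prod}) the inner product would evaluate to $\cZero^{X^\NN}$, so $\alpha = \ext{\initSeg{\alpha}{n}}$, and by extensionality $\omega(\alpha) = \omega(\ext{\initSeg{\alpha}{n}}) < n$, contradicting our hypothesis. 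Hence the product unfolds via (\ref{Prod-var}) to $a * \Prod{\initSeg{\alpha}{n} * a}{\omega}{\varepsilon}(q_{\initSeg{\alpha}{n} * a})$, where $a = \varepsilon_{\initSeg{\alpha}{n}}(\lambda x . \selEmb{\Prod{\initSeg{\alpha}{n} * x}{\omega}{\varepsilon}}(q_{\initSeg{\alpha}{n} * x})) = \varepsilon_{\initSeg{\alpha}{n}}(p_{\initSeg{\alpha}{n}})$. Comparing with the decomposition from Lemma \ref{main-lemma}, one reads off $\alpha(n) = a$, which is the first equation of (\ref{eqn-equilibrium}).

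For the second equation I would compute $q\alpha$ directly using the Lemma \ref{main-lemma} decomposition and the identity $q(s * \beta) = q_s(\beta)$. This yields
\[ q\alpha = q_{\initSeg{\alpha}{n}}\bigl(\Prod{\initSeg{\alpha}{n}}{\omega}{\varepsilon}(q_{\initSeg{\alpha}{n}})\bigr) = \selEmb{\Prod{\initSeg{\alpha}{n}}{\omega}{\varepsilon}}(q_{\initSeg{\alpha}{n}}). \]
Unfolding the product once more by (\ref{Prod-var}) with the $a = \alpha(n)$ just identified, and again absorbing the head into $q_{\initSeg{\alpha}{n} * a}$, gives $q\alpha = \selEmb{\Prod{\initSeg{\alpha}{n} * \alpha(n)}{\omega}{\varepsilon}}(q_{\initSeg{\alpha}{n} * \alpha(n)}) = p_{\initSeg{\alpha}{n}}(\alpha(n))$. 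Substituting the first equation $\alpha(n) = \varepsilon_{\initSeg{\alpha}{n}}(p_{\initSeg{\alpha}{n}})$ and using the definition $\selEmb{\varepsilon}(p) := p(\varepsilon p)$ then delivers $q\alpha = \selEmb{\varepsilon_{\initSeg{\alpha}{n}}}(p_{\initSeg{\alpha}{n}})$, as required.

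The only genuinely nontrivial step is the case analysis establishing that the recursion genuinely unfolds at stage $n$; once that is in place the rest consists of careful bookkeeping between the two equivalent formulations (\ref{Prod}) and (\ref{Prod-var}) of the product, the definition of the quantifier $\selEmb{(\cdot)}$, and the shifting identity for $q_s$. No induction on $n$ is needed beyond what already appears in Lemma \ref{main-lemma}; the theorem is essentially a clean corollary obtained by reading off the head and the outcome from a single application of that lemma.
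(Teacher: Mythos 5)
Your proposal is correct and follows essentially the same route as the paper's proof: the key step $(*)$ ruling out the base case at stage $n$ via extensionality and Lemma \ref{main-lemma}, then reading off $\alpha(n)$ from the unfolded recursion (\ref{Prod-var}), and computing $q\alpha = p_{\initSeg{\alpha}{n}}(\alpha(n))$ before substituting the first identity. The only cosmetic difference is that for the second equation the paper invokes Lemma \ref{main-lemma} directly at $n+1$, whereas you apply it at $n$ and unfold the product one extra step — the two computations coincide.
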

\begin{proof} Assume $n \leq \omega(\alpha)$. We argue that $(*) \; n \leq \omega(\initSeg{\alpha}{n} * \cZero)$. Otherwise, assuming $n > \omega(\initSeg{\alpha}{n} * \cZero)$ we would have, by Lemma \ref{main-lemma}, that $\alpha = \initSeg{\alpha}{n} * \cZero$. And hence, $n > \omega(\initSeg{\alpha}{n} * \cZero) = \omega(\alpha) \geq n$, which is a contradiction. Hence, we have that
\eqleft{
\begin{array}{lcl}
	\alpha(n) 
		& \stackrel{\textup{L}\ref{main-lemma}}{=} & \Prod{\initSeg{\alpha}{n}}{\omega}{\varepsilon}(q_{\initSeg{\alpha}{n}})(0) \\[1mm]
		& \stackrel{(\ref{Prod-var}) + (*)}{=} & \varepsilon_{\initSeg{\alpha}{n}}(\lambda x . \selEmb{\Prod{\initSeg{\alpha}{n} * x}{\omega}{\varepsilon}}(q_{\initSeg{\alpha}{n} * x})) \\[2mm]
		& = & \varepsilon_{\initSeg{\alpha}{n}}(p_{\initSeg{\alpha}{n}}),
\end{array}
}
by the definition of $p_s$. For the second equality, we have
\eqleft{
\begin{array}{lcl}
	q \alpha & \stackrel{\textup{L}\ref{main-lemma}}{=} & q_{\initSeg{\alpha}{n+1}}(\Prod{\initSeg{\alpha}{n+1}}{\omega}{\varepsilon}(q_{\initSeg{\alpha}{n+1}})) \\[2mm]
	& = & p_{\initSeg{\alpha}{n}}(\alpha(n)) \\[2mm]
	& = & \selEmb{\varepsilon_{\initSeg{\alpha}{n}}}(p_{\initSeg{\alpha}{n}}),
\end{array}
}
where the last equality uses that $\alpha(n) = \varepsilon_{\initSeg{\alpha}{n}}(p_{\initSeg{\alpha}{n}})$ is already shown.
\end{proof}

Theorem \ref{eps-main} characterises the product of selection functions as computing a sequence $\alpha$ that forms a kind of sequential equilibrium between the selection functions -- expressed by the equations (\ref{eqn-equilibrium}) -- up to a point $\omega\alpha$ parametrised by $\alpha$ itself. The significance of the product is that such equilibria appear naturally in a variety of contexts. In the following we outline perhaps the most illuminating of these contexts, namely the theory of sequential games.

\subsection{Sequential games and optimal strategies}
\label{games}

One of the most remarkable property of $\Prods$ is that it computes optimal strategies in a certain class of sequential games. The reader is encouraged to consult \cite{EO(2011A)} in conjunction with the relatively concise discussion here. 

As in this article we only consider games (in the sense of \cite{EO(2011A)}) where the quantifiers are attainable, we shall incorporate this restriction in the definition of the game itself.

\begin{definition}[Sequential games of unbounded length, \cite{EO(2011A)}] \label{seqgames} The \emph{type} of a game is given by a pair $(X, R)$ where
\begin{itemize}
	\item $X$ is the set of possible \emph{moves} at each round.
	\item $R$ is the set of possible outcomes of the game.
\end{itemize}
A finite sequence $s \colon X^*$ shall be thought of as a \emph{position} in the game determined by the first $|s|$ moves. An infinite sequence $\alpha \colon X^\NN$ is called a \emph{play} of the game. An unbounded sequential game of type $(X, R)$ is a triple $(\varepsilon, q, \omega)$ where

\begin{itemize}
\item $\varepsilon_s \colon J_R X$ determines the \emph{optimal move} at position $s$.
\item $q \colon X^\NN \to R$ determines, given a play $\alpha \colon X^\NN$, the outcome of the game.
\item $\omega \colon X^\NN \to\NN$ determines the \emph{relevant part} of a play.
\end{itemize}
The functions $q$ is called the \emph{outcome function}, whereas $\omega$ is called the \emph{control function}. Given a play $\alpha$, all moves $\alpha(i)$ for $i \leq \omega \alpha$ are \emph{relevant moves}. In general, a position $s$ is called relevant if $|s| \leq \omega{\hat s}$, i.e. if in a canonical extension of the current position $s$ the current move is considered a relevant move.
\end{definition}

We shall only consider infinite plays which are obtained by some canonical extension of a finite play $s$. Therefore, we think of these as finite games of unbounded length.  

The intuition behind Definition \ref{seqgames} is as follows. We think of the selection functions $\varepsilon_s$ as specifying at position $s$ what an optimal move at that point would be if we knew the final outcome corresponding to each of the candidate moves. The selection function takes this mapping $X \to R$ of moves to outcomes and tells us what an optimal move would be in that particular case. 

A \emph{strategy} in such game is simply a function $\nexts : X^* \to X$ which determines for each position $s$ what the next move $\nexts(s)$ should be. \emph{To follow a strategy} from position $s$ means to play all following moves according to the strategy, i.e. we obtain a sequence of moves $\alpha(0), \alpha(1), \ldots$ as
\[ \alpha(i) = \nexts(s * \initSeg{\alpha}{i-1}). \]
We call this the \emph{strategic extension of $s$}. The strategic extension of the empty play is called the \emph{strategic play}. 

\begin{definition}[Optimal strategies] A strategy is said to be \emph{optimal}\footnote{This is a stronger notion than the one introduced in \cite{EO(2011A)} for the more general case where the quantifiers are not necessarily attainable.} if the move played at each relevant position $s$ is the one recommended by the selection function $\varepsilon_s$, i.e.
\begin{equation} \label{opt-strategy}
\nexts(s) = \varepsilon_s(\lambda x . q(s * x * \alpha))
\end{equation}
where $\alpha$ is the strategic extension of $s * x$.  
\end{definition}

The main result of \cite{EO(2011A)} is that the product of selection functions computes optimal strategies:

\begin{theorem}[\cite{EO(2011A)}] \label{main-strategy} Given a game $(\varepsilon, q, \omega)$, the strategy
\begin{equation} \label{opt-str-def}
\nexts(s) \stackrel{X}{=} \left( \Prod{s}{\omega}{\varepsilon}(q) \right)_0
\end{equation}
is optimal, and, moreover,  
\begin{equation} \label{strategic-play}
\alpha \stackrel{X^\NN}{=} \Prod{s}{\omega}{\varepsilon}(q_s)
\end{equation}
is the strategic extension of $s$, i.e. $\alpha(n) = \nexts(s * \initSeg{\alpha}{n})$.
\end{theorem}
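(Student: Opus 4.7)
The plan is to prove the two equations (\ref{opt-str-def}) and (\ref{strategic-play}) essentially as consequences of Lemma \ref{main-lemma} and the unfolding equation (\ref{Prod-var}), establishing (\ref{strategic-play}) first and then deriving the optimality of $\nexts$ from it.

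First I would verify (\ref{strategic-play}). Set $\alpha = \Prod{s}{\omega}{\varepsilon}(q_s)$. Applying Lemma \ref{main-lemma} with starting position $s$ and outcome function $q_s$ gives, for every $n$,
$$\alpha = \initSeg{\alpha}{n} * \Prod{s * \initSeg{\alpha}{n}}{\omega}{\varepsilon}((q_s)_{\initSeg{\alpha}{n}}).$$
Using the immediate identity $(q_s)_t = q_{s*t}$ (from $q_a(\beta) = q(a*\beta)$) and reading off the first coordinate of the tail on the right-hand side, one obtains $\alpha(n) = \bigl(\Prod{s*\initSeg{\alpha}{n}}{\omega}{\varepsilon}(q_{s*\initSeg{\alpha}{n}})\bigr)_0 = \nexts(s * \initSeg{\alpha}{n})$, which is exactly the defining condition of a strategic extension of $s$.

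Next, for the optimality (\ref{opt-strategy}), I would fix any relevant position $s$ (i.e.\ $\omega(\hat s) \geq |s|$) so that the unfolding (\ref{Prod-var}) takes the non-trivial branch. This yields
$$\nexts(s) = \bigl(\Prod{s}{\omega}{\varepsilon}(q_s)\bigr)_0 = a_s = \varepsilon_s\bigl(\lambda x . \selEmb{\Prod{s*x}{\omega}{\varepsilon}}((q_s)_x)\bigr).$$
Again invoking $(q_s)_x = q_{s*x}$ and unfolding the bar operator, the inner expression becomes $q_{s*x}(\Prod{s*x}{\omega}{\varepsilon}(q_{s*x})) = q(s * x * \alpha_x)$, where $\alpha_x := \Prod{s*x}{\omega}{\varepsilon}(q_{s*x})$. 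By the already-proved equation (\ref{strategic-play}), $\alpha_x$ is the strategic extension of $s*x$, so substituting back produces exactly $\nexts(s) = \varepsilon_s(\lambda x . q(s*x*\alpha_x))$, which is (\ref{opt-strategy}). The case of an irrelevant position is automatic since optimality is only required at relevant positions (and there the product collapses to $\cZero$).

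The main obstacle here is purely notational bookkeeping: one must carefully distinguish the outcome function $q$ defined on full plays from its tail-shifted variants $q_t$ that actually appear as arguments to $\Prods$ at deeper positions, and verify that the shifts compose as $(q_s)_t = q_{s*t}$ so that the recursive invocations line up with strategic extensions in the original game. Once this is tracked, the proof is essentially a mechanical unwinding of the definitions, with the real combinatorial content of the theorem already packaged inside Lemma \ref{main-lemma}.
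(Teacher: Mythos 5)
Your proposal is correct and follows essentially the same route as the paper: establish the strategic-extension claim first via Lemma \ref{main-lemma}, then derive optimality at relevant positions by one unfolding of (\ref{Prod-var}) and an appeal to the claim just proved. Your explicit tracking of the shift identity $(q_s)_t = q_{s*t}$ is a welcome bit of extra care that the paper's proof leaves implicit.
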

\begin{proof} We have that
\eqleft{
\begin{array}{lcl}
	\alpha(n) & \stackrel{(\ref{strategic-play})}{=} & \Prod{s}{\omega}{\varepsilon}(q_s)(n) \\[2mm]
	& \stackrel{\textup{L}\ref{main-lemma}}{=} & \Prod{s * \initSeg{\alpha}{n}}{\omega}{\varepsilon}(q_{s * \initSeg{\alpha}{n}})(0) \\[2mm]
	& \stackrel{(\ref{opt-str-def})}{=} & \nexts(s * \initSeg{\alpha}{n}),
\end{array}
}
which proves the second claim. Hence, assuming $s$ is a relevant position, i.e. $(*) \; \omega(\hat s) \geq |s|$ we have
\eqleft{
\begin{array}{lcl}
	\nexts(s) & \stackrel{(\ref{opt-str-def})}{=} & \left( \Prod{s}{\omega}{\varepsilon}(q_s) \right)_0 \\[2mm]
	& \stackrel{(\ref{Prod-var}) + (*)}{=} & \varepsilon_s(\lambda x . \selEmb{\Prod{s * x}{\omega}{\varepsilon}}(q_{s * x})) \\[2mm]
	& = & \varepsilon_s(\lambda x . q(s * x * \beta))
\end{array}
}
where $\beta = \Prod{s * x}{\omega}{\varepsilon}(q_{s * x})$, by the second claim just proven above, is the strategic extension of $s * x$. Hence, we have shown (\ref{opt-strategy}).
\end{proof}

Therefore in this sense the main Theorem \ref{eps-main} characterises $\Prods$ as a procedure that computes an optimal strategy in the game defined by $(\varepsilon,q,\omega)$. We now show that the product also appears naturally in proof theory, with the advantage that it can be related back to the language of sequential games.   

\section{The dialectica interpretation of classical proofs}
\label{sec-main}

We now show how selection functions and their product are intrinsically connected to the functional interpretation of classical proofs. The key observation is that the language of selection functions elegantly captures the way in which the dialectica interpretation treats double negations in negative-translated formulas. In particular the product directly interprets the double negation shift that arises from the negative translation of the axiom of countable choice.

This means that in many cases the algorithms extracted from classical proofs can be easily phrased in the intuitive language of sequential games. Moreover, though couched in the language of higher type recursive functionals, these games often have a natural informal reading in terms of strategic set-theoretic constructions, making the \emph{mathematical} meaning of the extracted program more perspicuous. 

\subsection{Interpreting $\Sigma_2$ theorems}
\label{sigma-section}

Suppose are given a $\Sigma_2$ theorem $A\equiv\exists x^X\forall y^Y A_0(x,y)$ where $A_0$ is decidable.  The negative translation of $A$ is equivalent to $\neg\neg\exists x\forall y A_0(x,y)$, and therefore its functional interpretation is given by 
\eqleft{\nd{\nt{A}}{\varepsilon}{p}= A_0(\varepsilon p,p(\varepsilon p)).} 
In other words, the dialectica interpretation eliminates double negations in front of a $\Sigma_2$ formula with a selection function $\varepsilon\colon J_YX$. If the predicate $A_0(x,y)$ is thought of as prescribing `good' outcomes $y$ for a particular move $x$ as described in Section \ref{sec-intro}, then $\varepsilon$ implements a strategy that selects a move $x=\varepsilon p$ whose outcome with respect to the mapping $p$ is good. 

Thus under the functional interpretation we have the following mapping:
\begin{equation*}\boldsymbol{\begin{array}{rcl}\mbox{ $\Sigma_2$ \bf Theorems} & \mapsto & \mbox{\bf Selection functions}.\end{array}}\end{equation*}
The elimination of double negations in an arbitrary negated formula is essentially a (albeit complex) modular iteration of this process, suggesting to us that selection functions and modes of recursion based on selection functions lie behind the functional interpretation in a fundamental way.

There are several ways of characterising the selection function $\varepsilon$ interpreting $A$. For $\Sigma_2$ theorems Kreisel's \emph{no counterexample interpretation} coincides with the functional interpretation and in this sense the constructive interpretation of $A$ is a selection function $\varepsilon$ that refutes an arbitrary `counterexample' functions $p$. The following example demonstrates how selection functions are fundamental to the functional interpretation of pure classical logic.

\begin{example}[Law of excluded middle]Consider the following simple reformulation of the law of excluded middle for $\Sigma_1$ formulas, better known as the \emph{drinkers paradox}:
\begin{equation} \label{eqn-drinkers}
\DP \quad \colon \quad \exists x^X (\exists y P(y)\to P(x)).
\end{equation}
Note that $\DP$ is intuitionistically equivalent to the $\Sigma_2$ theorem $\exists x\forall y (P(y)\to P(x))$. We ineffectively justify the principle by defining $$x:=\left\{\begin{array}{ll}y & \mbox{for some $y$ satisfying $P(y)$} \\[2mm] 0_X & \mbox{if no such $y$ exists}\end{array}\right. .$$ On the other hand, we can \emph{effectively} justify the principle with the selection function
\begin{equation} \label{eqn-dpeps}
\varepsilon p :=
\left\{
\begin{array}{ll}
p(0_X) & \mbox{if }P(p(0_X)) \\[2mm]
0_X & \mbox{if }\neg P(p(0_X))
\end{array}
\right. 
\end{equation}
that witnesses its functional interpretation:
\begin{equation} \label{eqn-nddrinkers}
\nd{\DP}{\varepsilon}{p}=P(p(\varepsilon p))\to P(\varepsilon p).
\end{equation}
The drinkers paradox is essentially the law of excluded middle applied to the $\Sigma_1$-formula $\exists y P(y)$ i.e. 
\begin{equation*}\label{eqn-dp} \exists b^\BB(b=0\leftrightarrow \exists y P(y))\end{equation*}
where the boolean $b$ is given by $P(x)$. The mapping $p\colon X\to X$ in the functional interpretation of $\DP$ can be seen as a \textit{counterexample function} that attempts to witness $\neg\DP$ i.e.
\begin{equation} \label{eqn-ndrinkers}
\forall x(\neg P(x) \wedge P(px)).
\end{equation}
The constructive version of the law of excluded middle given by its functional interpretation is the statement that for any $n$, $p$ there exists an element $x$ refuting (\ref{eqn-ndrinkers}):
\begin{equation*}\label{eqn-nddp}
P(x)\vee\neg P(px).
\end{equation*}
The selection function $\varepsilon$ witnesses this statement.

\end{example}

One can alternatively view the selection function $\varepsilon$ interpreting $A$ as an algorithm that produces an arbitrary large \emph{approximation} to the ineffective object $x$ satisfying $\forall y A_0(x,y)$. In fact, when $Y = \NN$ the formula $A$ is equivalent to $\exists x^X\forall y \forall i \leq y A_0(x, i)$. Hence, the functional interpretation of $A$ is equivalent to the existence of a selection functions $\varepsilon$ satisfying 
\begin{equation*}\forall p \; \forall i\leq p(\varepsilon p) A_0(\varepsilon p,i).\end{equation*}
We see $p$ as a function that specifies in advance how we want to use $A$ in a particular computation, and $\varepsilon$ returns a sufficiently high quality approximation to $x$. This reading is closer to the notion of a `finitization' of as discussed by Tao in \cite{Tao(2007)}, in the sense that we interpret the qualitative statement that there exists some $x$ with the permanent property $\forall y A_0(x,y)$ by the quantitative statement that there exist approximations $x$ with the temporary property $\forall i\leq px A_0(x,i)$ for arbitrary $p$.

\begin{example}[Convergence and metastability]The functional interpretation of Cauchy convergence
\begin{equation*}\label{eqn-cauchy}\forall k>0\exists n\forall i,j\geq n \; (\|x_i-x_j\|\leq 2^{-k})\end{equation*}
is a sequence of selection functions $(\varepsilon_k)$ that satisfy
\begin{equation}\label{eqn-cauchynd} \forall k>0,p \; \forall i,j\in [\varepsilon_kp,\varepsilon_kp+p(\varepsilon_kp)] \; (\|x_i-x_j\|\leq 2^{-k}).\end{equation}
In other words, the Cauchy convergence property is equivalent to the existence of a sequence of selection functions $\varepsilon_k$ that compute regions of approximate stability, or \emph{metastability}, of size specified by $p$.

This reformulation of convergence plays a key role in ergodic theory, where one obtains quantitative versions of convergence theorems by extracting explicit bounds on $\varepsilon_kp$ that are highly uniform with respect to $(x_n)$. A simple example is the so-called `finite convergence principle' discussed in \cite{Kohlenbach(2008),Tao(2007)}, where one can easily show that given $k$ and $p$ a bounded monotone sequence $$0\leq x_0\leq x_1\leq\ldots\leq 1$$ experiences a period of metastability bounded uniformly by $\tilde p(2^k+1)(0)$ for $\tilde p(n):+p(n)$. 

A more involved example of the extraction of uniform bounds on the selection functions is, for instance, the quantitative mean ergodic theorem proved by Avigad et al. in \cite{Avigad(2010A)}.\end{example} 

\subsection{Interpreting the axiom of choice}
\label{choice-section}

Classical predicate logic $\PLomega$ can be extended to encompass most of mathematics through the addition of choice principles. In particular  the principle of \emph{finite} choice $\FC$ is known to be equivalent to induction and therefore we can define Peano arithmetic (assuming a minimal amount of arithmetic) as $\PAomega:=\PLomega+\FC$, while the further addition of countable choice $\AC$ yields a theory sufficient to formalise a large portion of analysis.

Thus a key part of understanding the computational content of classical proofs is to understand the computational interpretation of the axiom of countable choice combined with classical logic.

Let us first consider an instance of $\AC$ for $\Pi_1$ formulas:
\begin{equation*}\ACPi{1} \ \colon \ \forall n\exists x^X\forall y^Y A_n(x,y)\to\exists f^{\NN\to X}\forall n,y A_n(f n,y),\end{equation*}
for decidable $A_n$. Its negative translation is equivalent to
\begin{equation*}\forall n\neg\neg\exists x\forall y A_n(x,y)\to\neg\neg\exists f\forall n,y A_n(f n,y),\end{equation*}
and its dialectica interpretation is equivalent (using just Markov's principle, which is admitted by the dialectica interpretation) to the statement
\begin{equation}\label{eqn-ndac}\forall\varepsilon,q,\omega\exists f (\forall n,p A_n(\varepsilon_np,p(\varepsilon_np))\to\forall i\leq\omega f A_i(f i,q f)).\end{equation}
This constructive interpretation of $\AC$ asks for a selection function  $F^\varepsilon\colon J_{Y\times\NN}X^\NN$ producing an approximation to the sequence $f$, given selection functions $\varepsilon$ interpreting its premise. Such a selection function can be given by
\eqleft{F^\varepsilon(q,\omega):=\Prod{\pair{}}{\omega}{\varepsilon}(q).}
We now prove in detail that the product of selection functions directly realizes the functional interpretation of the axiom of choice. 

\begin{theorem} \label{thm-choice} The following hold:

\begin{enumerate} 

\item[(a)] The functional interpretation of the schema of \textit{finite choice}
\eqleft{\FC \ \colon \ \forall n\leq N\exists x^X A_n(x)\to\exists s^{X^\ast}\forall n\leq N A_n(s_n)}
is directly witnessed by the finite simple product of selection functions (i.e. $\omega$ a constant function).

\item[(b)] The functional interpretation of the schema of \textit{countable choice}
\eqleft{\AC \ \colon \ \forall n\exists x^X A_n(x)\to\exists f^{X^\NN}\forall n A_n(f n)}
is directly witnessed by the (unbounded) simple product of selection functions.

\item[(c)] The functional interpretation of the schema of \textit{dependent choice}
\eqleft{\DC \ \colon \ \forall s^{X^\ast}\exists x^X A_s(x)\to\exists f^{X^\NN}\forall n A_{\initSeg{f}{n}}(f n)}
is directly witnessed by the dependent product of selection functions.

\end{enumerate}\end{theorem}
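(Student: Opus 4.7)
The plan is to prove each clause by directly computing the dialectica interpretation of the corresponding choice schema and exhibiting the appropriate product of selection functions as a witnessing term. The technical heart of the argument in every case is Theorem \ref{eps-main}, which certifies that the sequence produced by $\Prods$ forms a sequential equilibrium with respect to the given selection functions.

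I would begin with (b), since (a) will fall out as a special case. After negative translation and dialectica, the principle $\AC$ reduces to the task already displayed in (\ref{eqn-ndac}) for $\Pi_1$-formulas, and the general case is entirely analogous. Given selection functions $\varepsilon_n$ realizing the interpretation of the premise $\forall n \exists x A_n(x)$, and arbitrary challenge functionals $q$ and $\omega$ attacking the conclusion, one must supply a sequence $f$ such that the interpreted premise, instantiated at a suitable $p$, delivers the interpreted conclusion for $f$ up to the bound $\omega f$. Setting $f := \Prod{\emptyseq}{\omega}{\varepsilon}(q)$ and invoking Theorem \ref{eps-main} with $\alpha = f$, one obtains, for each $n \leq \omega f$, both $f(n) = \varepsilon_n(p_{\initSeg{f}{n}})$ and $q f = p_{\initSeg{f}{n}}(f(n))$, where $p_s(x) = \selEmb{\Prod{s * x}{\omega}{\varepsilon}}(q_{s*x})$. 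Instantiating the interpreted premise at the counterexample function $p := p_{\initSeg{f}{n}}$ therefore yields exactly the required fact $A_n(f(n), q f)$, completing the verification.

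Part (a) is then a restriction of (b): taking $\omega \alpha := N$ constant collapses the unbounded product into a finite iteration of the binary product, which is definable already in $\systemT$, and the argument above shows that the resulting finite sequence $s \in X^\ast$ witnesses the interpretation of $\FC$. Part (c) requires no new conceptual ingredient but genuinely uses the full dependent product, because the hypothesis of $\DC$ supplies selection functions $\varepsilon_s$ indexed by finite sequences rather than by length alone; Theorem \ref{eps-main} was stated in exactly this dependent generality, so the verification proceeds unchanged save for carrying the $s$-dependence on $\varepsilon$ throughout, and the equilibrium equations recover $f(n) = \varepsilon_{\initSeg{f}{n}}(p_{\initSeg{f}{n}})$ matching the form $\forall n A_{\initSeg{f}{n}}(f n)$ of the conclusion of $\DC$.

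The principal obstacle is not mathematical but bookkeeping. One must carefully track how the dialectica interpretation of the implication in each choice principle distributes the witnessing and challenge variables contributed by the inner formula $A_n$ (respectively $A_s$), and check that the product of selection functions acts solely on the outermost layer of selection functions introduced by the choice axiom itself, being indifferent to the internal dialectica structure of $A_n$. To keep the argument transparent I would present the proof in full detail only for the illustrative case where $A_n$ is $\Pi_1$, as in (\ref{eqn-ndac}), and then remark that the extension to arbitrary $A_n$ is purely formal.
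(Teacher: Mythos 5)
Your proposal is correct and follows essentially the same route as the paper: reduce each choice schema to its $\Pi_1$ instance (absorbing the inner dialectica witnesses of $A_n$ into the choice type, exactly the bookkeeping you flag), then witness the interpreted implication by $\Prod{\emptyseq}{\omega}{\varepsilon}(q)$ together with the counterexample function $p_{\initSeg{F}{n}}$, with Theorem \ref{eps-main} supplying the equilibrium equations that close the argument. The only difference is organisational: the paper proves (c) in full and obtains (a) and (b) as special cases, whereas you build up from (b) to (a) and then to (c), which amounts to the same mathematics.
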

\begin{proof} We prove \emph{(c)}, the other parts are particular cases of this. Since $A_s(x)$ is equivalent to $\exists\tilde{x}^{\tilde{X}}\forall y\nd{\nt{A_s(x)}}{\tilde{x}}{y}$ it suffices to interpret $\DC$ for $\Sigma_2$-formulas
\eqleft{\mbox{$\Sigma_2$-}\DC \ \colon \ \forall s^{X^\ast}\exists x^X,\tilde{x}^{\tilde{X}}\forall y\nd{\nt{A_s(x)}}{\tilde{x}}{y}\to\exists f^{X^\NN}\forall n \exists\tilde{x}^{\tilde{X}}\forall y\nd{\nt{A_{\initSeg{f}{n}}(f n)}}{\tilde{x}}{y}.}
Moreover, by adding a dummy variable $t$ of type $\tilde{X}^\ast$ and concatenating the types $X$, $\tilde{X}$ this follows directly from an instance of $\Pi_1$-$\DC$ i.e.
\eqleft{\mbox{$\Pi_1$-}\DC \ \colon \ \forall s^{X^\ast},t^{\tilde{X}^\ast}\exists {x},\tilde{x}\forall y \nd{\nt{A_s(x)}}{\tilde{x}}{y}\to\exists f^{X^\NN},\tilde{f}^{\tilde{X}^\NN}\forall n,y \nd{\nt{A_{\initSeg{f}{n}}(f n)}}{\tilde{f} n}{y}.}
Therefore it suffices to deal with $\Pi_1$-$\DC$, which in general has a negative translation equiavalent to
\eqleft{\nt{\mbox{$\Pi_1$-}\DC} \ \colon \ \forall s^{X^\ast}\neg\neg\exists x^X\forall y A_s(x,y)\to\neg\neg\exists f\forall n,y A_{\initSeg{f}{n}}(f n,y).}
The dialectica interpretation of $\nt{\mbox{$\Pi_1$-}\DC}$ is equivalent to
\begin{equation}
\nd{\nt{\DC}}{F,p,s}{\varepsilon,\omega,q} \;\equiv\; A_s(\varepsilon_sp,p(\varepsilon_sp))\to A_{\initSeg{F}{\omega F}}(F(\omega F),qF),
\end{equation}
omitting, for the sake of readability, the parameters $\varepsilon, \omega$ and $q$ from the functions $F, p$ and $s$. In fact, these parameters $(\varepsilon,\omega,q)$ define a sequential game in the sense of Definition \ref{seqgames}. Therefore, let
\eqleft{
\begin{array}{lcl}
F & {=} & \Prod{\emptyseq}{\omega}{\varepsilon}(q) \\[2mm]
p_s(x) & {=} & \selEmb{\Prod{s * x}{\omega}{\varepsilon}}(q_{s * x}).
\end{array}
}
By Theorem \ref{eps-main} we have that $F$ and $p:=p_{\initSeg{F}{\omega F}}$ and $s:=\initSeg{F}{\omega F}$ are such that $\varepsilon_sp = F(\omega F)$ and $p(\varepsilon_sp) = q F$, and hence, clearly witness $\nd{\nt{\DC}}{F,p,s}{\varepsilon,\omega,q}$.
An analogous but simpler argument proves $(1)$ and $(2)$, proofs of which can also be found in \cite{EOP(2011)} and \cite{EO(2010A)} respectively.\end{proof}

Theorem \ref{thm-choice} proves that under the functional interpretation we have a mapping
\begin{equation*}\boldsymbol{\begin{array}{rcl}\mbox{\bf Choice principles} & \mapsto & \mbox{\bf Product of selection functions}.\end{array}}\end{equation*}
At first glance it may seem strange that an operation that computes optimal strategies in sequential games is related to the axiom of choice is this manner. But if we take a closer look, the game theoretic behaviour of (\ref{eqn-ndac}) becomes clear. The selection functions $\varepsilon_n$ which realise the premise of (\ref{eqn-ndac}) can be seen as a collection of strategies each witnessing the $\Sigma_2$ theorems $(A_n)$. The dialectica interpretation calls for a procedure that takes these pointwise strategies and produces a co-operative selection function $F$ that witnesses $\forall n A_n$. Such a procedure is provided naturally by the product of selection functions.

In the following examples we illustrate how the interpretation of theorems that make direct use of the axiom of choice can be given an intuitive game-theoretic constructive interpretation by the product of selection functions.

\begin{example}[Arithmetic comprehension]We first give a realizer for the functional interpretation of arithmetic comprehension for $\Sigma^0_1$ formulas, which states that for any $\Sigma_1$ predicate $\varphi$ over $\NN$ there exists a set $X$ with
\[ \forall n (n\in X \Leftrightarrow \exists y \pred(n,y)). \]
Computing such $X$ is in general not possible. We can, however, try to compute an ``approximation" to $X$. For instance, we might ask for an $\tilde X$ which only works for a finite number of $n$'s, or an approximation which only checks the existence of $y$'s up to a certain bound (possibly depending on the approximating set $\tilde X$). We call these calibrations of the `size' and `depth' of $X$, respectively.

Arithmetic comprehension follows from the formal statement
\begin{equation*}
\exists f^{\NN\to\NN}\forall n(\exists y \pred(n,y)\to\exists k< fn\pred(n,k)),
\end{equation*}
where we define $X:=\{n \; | \; \exists k<fn\pred(n,k)\}$. Again, we cannot (in general) effectively construct $X$. Indeed, the above is a direct consequence of countable choice applied the non-constructive statement
\begin{equation}\label{eqn-caacprem}\forall n\exists x^\NN(\exists y\pred(n,y)\to\exists k< x\pred(n,k)).\end{equation}
But this is just a collection of instances of $\DP$ applied to the formulas $P_n(x):=\exists k< x\pred(n,k)$. Therefore defining the sequence of selection functions $(\varepsilon_n)$ by
\begin{equation*} \label{eqn-caeps}
\varepsilon_n p:=
\left\{
\begin{array}{ll}
0 & \mbox{if }\forall k<p(0)\negpred(n,k) \\[2mm]
p(0) & \mbox{if }\exists k< p(0)\pred(n,k) 
\end{array}\right. 
\end{equation*}
we have 
\begin{equation*}\exists k< p(\varepsilon_np)\pred(n,k)\to\exists k< \varepsilon_np\pred(n,k)\end{equation*}
for any $n$, $p$, and thus by Theorem \ref{eps-main}, for any counterexample functionals $\omega$, $q$, setting $F:=\Prod{\pair{}}{\omega}{\varepsilon}(q)$ we have
\begin{equation}\label{eqn-ndca}\forall i\leq\omega F(\exists k< qF\pred(i,k)\to\exists k< Fi\pred(i,k))\end{equation}
which is equivalent to the functional interpretation of $\CASig$. So what is the game-theoretic interpretation of our realizer $F$? If we unravel (\ref{eqn-ndca}) we see that we are essentially constructing a finite set 
\[ X_F:=\{i\leq\omega F \; | \; \exists k< Fi\pred(i,k)\} \]
that serves as an approximation to $X$ with the property that if $i\leq\omega F$ has a witness for $\varphi$ bounded by $qF$ then we must have $i\in X_F$. In this sense $\omega$ and $q$ can be read as set functions that calibrate the `size' and `depth' respectively of an approximation to $X$.

The set $X_F$ is constructed as an optimal play in the game $(\varepsilon,q,\omega)$. The job of the selection functions at round $n$ is to decide whether or not to include the number $n$ in the approximation, given that it has already made this decision for $\{0,\ldots,n-1\}$. Its default is to omit $n$ by playing $0$, but if the resulting outcome $p_n(0)$ bounds some witness to $n$, it instead adds $n$ and steals this witness as justification.

Therefore in this scenario the product of selection functions forms an intuitive set-theoretic construction, starting with the empty set and strategically adding elements until it reaches the desired approximation. When interpreting a theorem that makes use of arithmetic comprehension as a lemma, we can simply plug in our realizer and impart its game theoretic meaning to better understand the realizer of the main theorem.

Some simple examples of well-known existence theorems that can be given a direct constructive interpretation using this instance of the product can be found in e.g. Simpson \cite{Simpson(99)}, such as the existence of maximal ideas in countable commutative rings or torsion subgroups in countable abelian groups. A more involved consequence of arithmetic comprehension using a more complex game, the Bolzano-Weierstrass theorem, will be discussed in the next section.\end{example}

\begin{example}[No injection $(\NN\to\NN)\to\NN$]\label{ex-noinj}Following \cite{Oliva(2006)} we show that a higher type instance of the product that produces a sequence of \emph{functions} can be used to effectively prove that there is no injection $\Psi\colon (\NN\to\NN)\to\NN$ in any model of functionals in which the unbounded product exists. This time we consider the drinkers paradox applied to the formulas $P_n(f^{\NN\to\NN}):=(n=\Psi f)$. Defining
\begin{equation*} \label{eqn-nieps}
\varepsilon_n p :=
\left\{
\begin{array}{ll}
p(\cZero) & \mbox{if }n=\Psi(p(\cZero)) \\[2mm]
\cZero^{\NN^\NN} & \mbox{if }n\neq \Psi(p(\cZero)),
\end{array}
\right.
\end{equation*}
where $\cZero^{\NN^\NN} = \lambda k . 0$, we have
\begin{equation*}\forall n,p(n=\Psi(p(\varepsilon_nP))\to n=\Psi(\varepsilon_np)).\end{equation*}
As before, by Theorem \ref{eps-main}, setting $F:=\Prod{\pair{}}{\omega}{\varepsilon}(q)$ we obtain
\begin{equation} \label{eqn-ndni}
\forall n\leq\omega F(n = \Psi(qF) \to n = \Psi(F_n)).
\end{equation}
Setting $qF\stackrel{\NN\to\NN}{:=}\lambda k.(F_k(k)+1)$ we have a diagonal function that differs from each $F_k$ at point $k$. Furthermore, if we set $\omega F=\Psi(qF)$, then from (\ref{eqn-ndni}) on $i=\omega F=\Psi(qF)$ we obtain $$\Psi(qF)=\Psi(F_{\Psi(qF)}).$$ But $qF$ and $F_{\Psi(qF)}$ differ by definition, and we're done.

This is simply a constructive version of the usual diagonalisation argument used to prove that there is no injection from the real numbers to the natural numbers. The outcome functional $q\alpha$ is defined so that it differs from each of the $\alpha i$ on at least one value. As before, the $\varepsilon_n$ plays a default value $\cZero$ and looks at the diagonal function $p_n(\cZero)$ obtained by applying $q$ to the optimal continuation of this move. If $\Psi(p_n(\cZero))$ it steals this witness and sets $\alpha n=p_n(\cZero)$, else it is not concerned and sticks with the default move. The idea is to construct a sequence of representative functions $\alpha$ such that if $\Psi h$ for some $h$, then we must also have $\Psi(\alpha n)$. Of course we cannot effectively produce such an $\alpha$, but using the product of selection functions we \textit{can} produce an approximation that works at the point $\Psi(q\alpha)$, which is actually all that we need.
\end{example}

\subsection{The product versus standard modes of recursion}

A consequence of Theorem \ref{thm-choice}, and the fact that classical arithmetic and analysis can be formulated as classical logic plus finite and countable choice respectively, is that the functional interpretation of classical proofs can be given entirely in terms of the product of selection functions. In fact we can reformulate Theorem \ref{thm-soundness} as follows, although we omit the details here and encourage the reader to consult \cite{EO(2010A),EOP(2011)} instead.

\begin{theorem} \label{thm-soundnesseps} We have

\begin{itemize}

\item[(a)]$\PAomega$ has a functional interpretation in primitive recursive arithmetic plus the finite product of selection functions (see \cite{EOP(2011)} for details).

\item[(b)]$\PAomega+\AC$ has a functional interpretation in primitive recursive arithmetic plus the unbounded product of selection functions.

\end{itemize}\end{theorem}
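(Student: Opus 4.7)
The plan is to re-prove Theorem \ref{thm-soundness} following its original structure, but substituting G\"odel's higher-type primitive recursors and Spector's bar recursor by the finite and unbounded products of selection functions respectively. The proof proceeds by induction on the derivation, and we need only revisit those steps where higher-type recursion is genuinely used.

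First I would treat the purely logical core. Pure classical predicate logic $\PLomega$, after Kuroda's negative translation, has an ND-interpretation whose realizers are definable using only terms of $\PRA$ together with definition by cases at higher types. The only non-trivial ingredient is the interpretation of instances of $\LEM$ (equivalently, of $\DP$), which is realized by the selection-function style definition illustrated in (\ref{eqn-dpeps}); no recursion at higher types is required for this fragment, so the logical axioms and rules do not contribute anything beyond $\PRA$.

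Second, for part (a), it remains to interpret the induction schema of $\PAomega$. The key observation is that over $\PLomega$ the induction schema is equivalent to the schema of finite choice $\FC$, so it suffices to interpret $\FC$. By Theorem \ref{thm-choice}(a) this is witnessed directly by the finite simple product of selection functions (i.e.\ $\Prods$ with a constant control $\omega$). Composing this with the interpretation of $\PLomega$ via the modus-ponens clause of the dialectica interpretation yields part (a). For part (b) one additionally interprets countable choice $\AC$, which by Theorem \ref{thm-choice}(b) is directly witnessed by the unbounded simple product of selection functions; composition with the interpretation of $\PAomega$ from part (a) then gives an interpretation of $\PAomega + \AC$ in $\PRA$ together with the unbounded product.

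The main obstacle is to show that this replacement is sharp: one must verify that every higher-type functional arising from the dialectica translation of a proof in $\PAomega$ (respectively $\PAomega + \AC$) is in fact definable from $\PRA$ plus the finite (respectively unbounded) product at the relevant types, without silently reintroducing an independent primitive recursor at higher types. Concretely, when induction is applied to a formula whose witnessing variables live at a type $X$, one must show that the required recursion is already packaged inside the instance of $\Prods$ at type $X$, and that substitution and composition of such terms preserve definability in the target calculus. The details of this reduction, together with the corresponding analysis for $\AC$, are carried out in \cite{EOP(2011)} and \cite{EO(2010A)} and form the bulk of the technical work.
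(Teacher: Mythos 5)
Your proposal is correct and follows essentially the same route as the paper, which itself gives no detailed proof but derives the theorem from the formulation of arithmetic and analysis as classical logic plus finite/countable choice together with Theorem \ref{thm-choice}, deferring the technical verification that no independent higher-type recursor is needed to \cite{EOP(2011)} and \cite{EO(2010A)}. Your explicit treatment of the logical core and of the modus-ponens composition step is a reasonable fleshing-out of what the paper leaves implicit.
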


It is natural then to ask how the product compares to those modes of recursion typically used in the functional interpretation of arithmetic and analysis.

G\"{o}del's primitive recursive functionals of finite type \cite{Goedel(58)} are the functional analogue of induction. By Theorem \ref{thm-choice} (a) the finite product of selection functions is a functional analogue of finite choice, which is known to be equivalent to induction \cite{Parsons(70)}. In \cite{EOP(2011)} it is shown that the finite product is in fact equivalent to G\"{o}del's primitive recursors over a weak $\lambda$-calculus, and thus offers an alternative construction of system $\systemT$.

Countable choice and dependent choice are typically interpreted using Spector's bar recursion \cite{Spector(62)}. By Theorem \ref{thm-choice} (b) and (c) we see that these are also interpreted by the unbounded product, and in \cite{EO(2010A)} it is shown that bar recursion is primitive recursively equivalent to the unbounded product. The whole picture is sketched in Figure \ref{fig-int}.

\begin{figure}[h]
\begin{center}{\small
\[\xymatrix{{{\mbox{Induction}}}\ar[rr]^{\textup{\cite{Goedel(58)}}}\ar@{<=>}[d]_{\textup{\cite{Parsons(70)}}} & & {{\mbox{Primitive recursion}}}\ar@{<=>}[d]^{\textup{\cite{EOP(2011)}}} \\ {{\mbox{Finite choice}}}\ar[rr]_{\textup{\cite{EOP(2011)}}} & & {{\mbox{Finite product}}} \\ & & {{\mbox{Unbounded product}}}\ar@{<=>}[dd]^{\textup{\cite{EO(2009)}}} \\ {{\mbox{Countable/dependent choice}}}\ar[rru]^{\textup{\cite{Spector(62)}}}\ar[rrd]_{\textup{\cite{EO(2010A)}}} & &  \\ & & {{\mbox{Spector's bar recursion}}}}\]}
\end{center}
\caption{Functional intepretation of arithmetic and analysis}
\label{fig-int}
\end{figure}
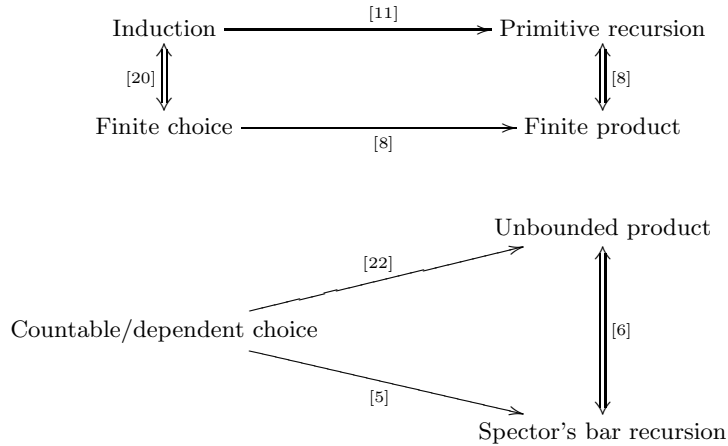 

Of course our point here is that the advantage of using the product as opposed to the other modes of recursion is that it has a highly intuitive semantics, and witnesses extracted using the product often have a clear game-theoretic meaning. This is in stark constrast to other methods, particularly Spector's bar recursion, which are often very difficult to comprehend on a semantic level.

\section{Interpreting the Bolzano-Weierstrass theorem}
\label{sec-bolzano}

In this section we present a case study in which we formally extract a realizer for the functional interpretation of the Bolzano-Weierstrass theorem using the product of selection functions.

The constructive content of this theorem has been studied before, and in particular a detailed analysis using the dialectica interpretation and Spector's bar recursion is given in \cite{Safarik(2010)}. 

Our aim here is to show that, even though the Bolzano-Weierstrass theorem is relatively complex from a logical point of view, one can extract from its proof a program whose behaviour can be clearly understood, at least on an informal level, in terms of optimal strategies in sequential games. 

As in \cite{Safarik(2010)} we analyse a proof of the theorem that combines countable choice with \emph{weak K\"{o}nig's lemma} - the statement that all infinite binary trees $T$ have an infinite branch:
\begin{equation*}\label{eqn-wkl}\WKL \; \colon \; \forall n\exists s^{\BB^{n}} \; T(s)\to\exists\alpha\forall n \; T(\initSeg{\alpha}{n}).\end{equation*}
We use the product of selection functions only to interpret the instance of countable choice used in the proof, as this forms the core of the extracted algorithm. In \cite{Howard(81A)} Howard showed that $\WKL$ has a functional interpretation using only a weak, binary form of bar recursion. Rather than interpreting $\WKL$ using the product of selection functions (as done in \cite{OP(2011B)}), for simplicity we just make use of Howard's realizer as its contribution to the complexity of the overall program is small compared to the main instance of the product.

\subsection{The Bolzano-Weierstrass theorem.}

The Bolzano-Weierstrass theorem states that any bounded sequence in $\RR^n$ has a convergent subsequence. Here we restrict ourselves to sequences of rationals $(x_i)$ in the unit interval $[0,1]$, as our analysis can be readily generalised. In the language of formal analysis, the Bolzano-Weierstrass theorem is given by
\begin{equation*} \label{eqn-bw0}
\BW(x_i) \quad \colon \quad \exists a^{\BB^\NN},b^{\NN^\NN}\forall n(bn<b(n+1)\wedge x_{bn}\in I_{\initSeg{a}{n}}),
\end{equation*}
where for a finite sequence of booleans $s$ we define the interval $$I_s:=\left[\sum_{i=0}^{|s|-1}\frac{s_i}{2^{i+1}} \; , \; \sum_{i=0}^{|s|-1}\frac{s_i}{2^{i+1}}+\frac{1}{2^{|s|}}\right]$$ for $|s|>0$ and $I_{\pair{}}:=[0,1]$. Intuitively $a$ encodes a limit point $\bar{a}:=\sum^{\infty}\frac{a_i}{2^{i+1}}$ of the sequence $(x_i)$ and $b$ defines a subsequence converging to this limit point, where $|x_{bn}-\bar{a}|\leq 2^{-(n-1)}$ for all $n$.

The functional interpretation of the Bolzano-Weierstrass theorem is given by
\begin{equation*} \label{eqn-bwnd}
\nd{\BW(x_i)}{A,B}{\psi} = \forall n \leq \psi AB (Bn < B(n+1)\wedge x_{B n}\in I_{\initSeg{A}{n}}),
\end{equation*}
where, to easy readability, we are omitting the dependency of $A$ and $B$ on $\psi$. The interpreted theorem states that there exist arbitrary large finite approximations $B$ to a convergent subsequence, in the sense that $|x_{Bn}-\bar{A}|\leq 2^{-(n-1)}$ for all $n\leq\psi AB$.

\subsection{A formal proof of $\BW(x_i)$.}

Assume an infinite sequence of rationals $(x_i)_{i \in \NN}$ is fixed. Let us prove theorem $\BW(x_i)$, i.e.
\[ \exists a^{\BB^\NN},b^{\NN^\NN}\forall n(bn<b(n+1)\wedge x_{bn}\in I_{\initSeg{a}{n}}) \]
directly using $\WKL$ and $\AC$. We define the predicate $T$ by
\begin{equation} \label{tree-def} 
T(s^{\BB^\ast},k) \; := \; |s|<k\wedge\exists i\in (|s|,k] \; \big( x_i\in I_s \big).
\end{equation}
We write $\BB^*$ for finite sequences of booleans, and $\BB^n$ for sequences of booleans of length $n$.

\begin{lemma} \label{lem-monotoneskolem} By countable choice $\AC$ there exists a function $\beta\colon\NN\to\NN$ such that
\begin{equation}
\label{eqn-monotoneskolem}
\forall n,s^{\BB^n}(\exists kT(s,k) \to T(s,\beta n)).
\end{equation}
\end{lemma}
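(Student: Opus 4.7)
The plan is to apply countable choice $\AC$ after first reducing, for each fixed $n$, the uniformity requirement over all $s\in\BB^n$ to a single natural number witness. The reduction relies on two elementary observations about the predicate $T$ defined in (\ref{tree-def}).

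First, I would record that $T(s,k)$ is \emph{monotone in $k$}: if $T(s,k)$ holds and $k\leq k'$, then $T(s,k')$ holds as well. This is immediate from the definition, since any witness $i\in(|s|,k]$ with $x_i\in I_s$ also satisfies $i\in(|s|,k']$, and $|s|<k\leq k'$. Second, for each $n$ the set $\BB^n$ is finite with $2^n$ elements, which means finite choice (available in $\PAomega$) applies to index $\BB^n$ by witnesses.

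Combining these, I would prove the intermediate statement
\begin{equation*}
\forall n \, \exists k_n \, \forall s^{\BB^n}\bigl(\exists k\,T(s,k)\to T(s,k_n)\bigr).
\end{equation*}
For fixed $n$, enumerate $\BB^n=\{s_1,\ldots,s_{2^n}\}$. For each $s_j$, either $\neg\exists k\,T(s_j,k)$ (in which case assign $k_{s_j}:=0$) or pick a witness $k_{s_j}$ with $T(s_j,k_{s_j})$. Setting $k_n:=\max_j k_{s_j}$ and invoking monotonicity of $T$ in $k$, we get $T(s,k_n)$ for every $s\in\BB^n$ with $\exists k\,T(s,k)$.

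Finally I would apply $\AC$ to the intermediate statement to extract a function $\beta\colon\NN\to\NN$ witnessing (\ref{eqn-monotoneskolem}). There is essentially no obstacle here: the only subtlety is noticing that the finiteness of $\BB^n$ together with the monotonicity of $T$ in $k$ lets us uniformize over $s$ inside each slice before invoking countable choice over $n$, so that only \emph{countable} (and not some higher-type) choice is needed.
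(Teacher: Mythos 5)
Your proof is correct and follows essentially the same route as the paper: both derive the intermediate uniformization $\forall n\exists L\forall s^{\BB^n}(\exists k\,T(s,k)\to T(s,L))$ from the monotonicity of $T$ in its second argument together with the finiteness of $\BB^n$, and then apply $\AC$. The only difference is presentational: where you carry out the classical case split and the finite maximum explicitly, the paper names these steps as the drinkers paradox and bounded collection, a phrasing chosen to line up with the selection functions $\delta_n$ used in the subsequent extraction.
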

\begin{proof} By the drinkers paradox we have
\[ \forall n,s^{\BB^n}\exists l(\exists k T(s,k)\to T(s,l)). \]
By bounded collection and the fact that $T$ has the monotonicity property $T(s,k)\to T(s,k+l)$ we have 
\begin{equation} \label{eqn-acprem}
\forall n\exists L\forall s^{\BB^n}(\exists k T(s,k)\to T(s,L)).
\end{equation}
Finally, by countable choice we obtain $\beta$ satisfying (\ref{eqn-monotoneskolem}).
\end{proof}

For the rest of the section let $\beta$ be a function whose existence is shown in the above lemma, so $\beta$ satifies (\ref{eqn-monotoneskolem}). Also, define $T^\beta(s):=T(s,\beta(|s|))$, so that $\exists kT(s,k)\leftrightarrow T^\beta(s)$.

\begin{corollary} $T^\beta(s)$ is a decidable tree predicate.
\end{corollary}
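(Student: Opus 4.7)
The plan is to verify two things: (i) $T^\beta$ is decidable, and (ii) $T^\beta$ is closed under taking prefixes, i.e.\ it defines a tree.

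For decidability, note that $T(s,k) \equiv |s|<k \wedge \exists i\in(|s|,k]\,(x_i\in I_s)$ is a decidable predicate: the first conjunct is a comparison of natural numbers, and the bounded existential ranges over the finite set $\{|s|+1,\ldots,k\}$ with each membership test $x_i \in I_s$ decidable since the endpoints of $I_s$ are rationals computable from $s$. Since $\beta\colon\NN\to\NN$ is a total (computable) function given by the lemma, $T^\beta(s) = T(s,\beta(|s|))$ is decidable as the composition of a decidable relation with a total function.

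For the tree property, I would argue as follows. Suppose $s' \preceq s$ and $T^\beta(s)$ holds; I want $T^\beta(s')$. From $T^\beta(s) = T(s,\beta(|s|))$ we get some $i \in (|s|,\beta(|s|)]$ with $x_i \in I_s$. Because $s'$ is a prefix of $s$ we have $I_s \subseteq I_{s'}$ (this is immediate from the defining formula for $I_s$: extending a binary expansion only shrinks the dyadic interval). Hence $x_i \in I_{s'}$, and taking $k$ large enough (say $k = \max(|s'|+1,i)$, noting $i > |s| \geq |s'|$ so $i > |s'|$) we obtain $T(s',k)$, i.e.\ $\exists k\,T(s',k)$. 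Applying the defining property (\ref{eqn-monotoneskolem}) of $\beta$ to the prefix $s'$ of length $|s'|$ yields $T(s',\beta(|s'|)) = T^\beta(s')$, as required.

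No step here is a serious obstacle; the only point worth stating carefully is the containment $I_s \subseteq I_{s'}$ when $s'\preceq s$, which is purely a calculation with the partial sums defining $I_s$. The argument uses nothing beyond the definition of $T$, the prefix-monotonicity of the dyadic intervals, and the characterising property of $\beta$ from Lemma~\ref{lem-monotoneskolem}.
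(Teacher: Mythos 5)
Your proof is correct and follows essentially the same route as the paper: prefix-closure comes from the containment $I_{s\ast t}\subseteq I_{s}$ together with the defining property (\ref{eqn-monotoneskolem}) of $\beta$ (the paper chains the two implications $T(s\ast t,\beta(|s\ast t|))\to T(s,\beta(|s\ast t|))\to T(s,\beta(|s|))$ instead of extracting the witness $i$ explicitly, but this is the same argument). One small caveat: $\beta$ is produced by an ineffective application of countable choice and is not computable in general, so the paper claims only that $T^\beta$ is decidable \emph{in the oracle} $\beta$; your parenthetical ``(computable)'' should be read in that relativized sense.
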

\begin{proof} $T^\beta(s)$ is clearly decidable in the given oracle $\beta$. It remains to see that it is prefix-closed. Observe that $T(s\ast t,\beta(|s\ast t|))\to T(s,\beta(|s\ast t|))$, by the definition of $T$. Also, by (\ref{eqn-monotoneskolem}), $T(s,\beta(|s\ast t|))\to T(s,\beta(|s|))$. Combining the two we have $T^\beta(s * t) \to T^\beta(s)$.
\end{proof}

Hence, given a decidable binary tree predicate, we can apply weak K\"onig's lemma to obtain the following:

\begin{lemma} \label{lem-bwwkl}There exists a sequence $a \colon \BB^\NN$ such that 
\begin{equation} \label{eqn-bwwkl}
\forall n \underbrace{\left(n<\beta n\wedge\exists i\in (n,\beta n]  \big( x_i\in I_{\initSeg{a}{n}} \big) \right)}_{T^\beta(\initSeg{a}{n})}.
\end{equation}
\end{lemma}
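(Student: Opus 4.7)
The plan is to invoke weak König's lemma on the decidable binary tree predicate $T^\beta$ established in the preceding corollary. That corollary already provides decidability and prefix-closure of $T^\beta$, so the only missing ingredient before we can apply $\WKL$ is that $T^\beta$ is infinite as a binary tree, i.e.\ that for each $n$ there is some $s \in \BB^n$ with $T^\beta(s)$.

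This is the key step, and it is essentially a pigeonhole observation baked into the definition of $T$. Fix $n$ and consider the point $x_{n+1} \in [0,1]$. Since the dyadic intervals $\{I_s \mid s \in \BB^n\}$ cover $[0,1]$, there is some $s \in \BB^n$ with $x_{n+1} \in I_s$. Taking $k = n+1$ we have $|s| = n < k$ and $n+1 \in (n, k]$, so by (\ref{tree-def}), $T(s, n+1)$ holds. Hence $\exists k\, T(s,k)$, which by Lemma \ref{lem-monotoneskolem} (or equivalently the observation that $\exists k\, T(s,k) \leftrightarrow T^\beta(s)$) yields $T^\beta(s)$.

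With infiniteness in hand, weak König's lemma applied to $T^\beta$ delivers an infinite branch $a \colon \BB^\NN$ satisfying $T^\beta(\initSeg{a}{n})$ for every $n$. Unfolding the definition $T^\beta(s) = T(s, \beta(|s|))$ at $s = \initSeg{a}{n}$, for which $|s| = n$, this reads
\[ n < \beta n \wedge \exists i \in (n, \beta n] \big( x_i \in I_{\initSeg{a}{n}} \big), \]
which is exactly (\ref{eqn-bwwkl}).

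There is no serious obstacle to overcome: the hard work was done upstream, where the careful choice of the predicate $T$ and the Skolem function $\beta$ ensured both that $T^\beta$ is decidable and that its infiniteness reduces to the trivial covering property of dyadic subintervals of $[0,1]$. The genuinely interesting subtleties will only appear at the next stage, when one unwinds the functional interpretation of the $\AC$-step used to produce $\beta$ and must replace it by a product of selection functions.
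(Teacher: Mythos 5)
Your proof is correct and follows essentially the same route as the paper: establish that $T^\beta$ has nodes at every level via the covering of $[0,1]$ by the dyadic intervals $I_s$ for $s\in\BB^n$, lift $T(s,n+1)$ to $T^\beta(s)$ using (\ref{eqn-monotoneskolem}), and then apply $\WKL$ to the decidable prefix-closed tree $T^\beta$. Your choice of $x_{n+1}$ (rather than the paper's $x_n$) is in fact the cleaner match for the half-open interval $(|s|,k]$ in the definition (\ref{tree-def}); the only thing the paper adds is a name, $h$, for the level-$n$ node so constructed, which it reuses later.
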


\begin{proof} This follows from $\WKL$ applied to $T^\beta$, once we have shown that the tree $T^\beta(s)$ has branches of arbitrary length. To see that, fix $n$ and let $s$, with $|s| = n$, be the index of the interval $I_s$ which contains $x_n$. Then, clearly we have 
\[ |s| < n+1 \wedge x_n \in I_s \]
which implies $T(s, n+1)$. By (\ref{eqn-monotoneskolem}) we obtain $T(s, \beta(|s|))$. For future reference, we call $h$ this function producing $s$ for each given $n$.
\end{proof}

\begin{theorem}[Bolzano-Weierstrass]\label{thm-bw}Given a sequence of rationals $(x_i)_{i \in \NN}$, there exists $a \colon \BB^\NN$ and $b \colon \NN^\NN$ such that 
\begin{equation}\label{eqn-bw}\forall n(bn<b(n+1)\wedge x_{bn}\in I_{\initSeg{a}{n}}).\end{equation}\end{theorem}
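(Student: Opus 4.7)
The plan is to fix $a$ via Lemma \ref{lem-bwwkl} and then build $b$ by a straightforward primitive recursion on top of the witness data already provided. Lemma \ref{lem-bwwkl} guarantees, for every $n$, an index $i \in (n, \beta n]$ with $x_i \in I_{\initSeg{a}{n}}$. Since this range is finite and the membership predicate is decidable, bounded search yields a witness function
\[ h(n) \; := \; \mu i \leq \beta n \; . \; \big( n < i \wedge x_i \in I_{\initSeg{a}{n}} \big), \]
satisfying $h(n) > n$ and $x_{h(n)} \in I_{\initSeg{a}{n}}$. At this point the only obstruction to taking $b := h$ directly is that $h$ is not guaranteed to be strictly monotone.

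The one additional ingredient is the standard nested-interval property of dyadic intervals: whenever $s \preceq t$ one has $I_t \subseteq I_s$. In particular, since $\initSeg{a}{n}$ is a prefix of $\initSeg{a}{m}$ whenever $m \geq n$, we have $I_{\initSeg{a}{m}} \subseteq I_{\initSeg{a}{n}}$. This lets us harvest a witness at any sufficiently deep level and still have it lie in the shallower interval we actually care about, which is what buys us the freedom to force monotonicity.

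Using this, I would define $b$ recursively by $b(0) := h(0)$ and, given $b(n)$, setting $m_n := \max(n+1,\, b(n)+1)$ and $b(n+1) := h(m_n)$. Then $b(n+1) = h(m_n) > m_n \geq b(n)+1$, which gives the monotonicity clause $bn < b(n+1)$; and $x_{b(n+1)} = x_{h(m_n)} \in I_{\initSeg{a}{m_n}} \subseteq I_{\initSeg{a}{n+1}}$ since $m_n \geq n+1$, which gives the membership clause. The base case $x_{b(0)} \in I_{\initSeg{a}{0}} = [0,1]$ is immediate from the assumption $x_i \in [0,1]$. The only step requiring any real calculation is the nested-interval inclusion, which is routine from the definition of $I_s$ in terms of dyadic expansions. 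Note that the construction of $b$ is purely primitive recursive in the data $\beta$ and $a$ supplied by Lemmas \ref{lem-monotoneskolem} and \ref{lem-bwwkl}, so no further appeal to $\AC$ or $\WKL$ is required at this stage; the whole interest of the interpretation, as the subsequent sections make clear, lies in interpreting those two earlier lemmas.
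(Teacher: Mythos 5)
Your proposal is correct and follows essentially the same route as the paper: take $a$ from Lemma \ref{lem-bwwkl}, recursively pick the next witness by bounded search at depth $b(n)+1$ (your $h(m_n)$ with $m_n=b(n)+1$ is exactly the paper's $\mu i\in (bn+1,\beta(bn+1)]\,(x_i\in I_{\initSeg{a}{bn+1}})$), and use the nested-interval inclusion $I_{\initSeg{a}{bn+1}}\subseteq I_{\initSeg{a}{n+1}}$. The only cosmetic differences are your base case $b(0):=h(0)$ versus the paper's $b(0):=0$ (both land in $I_{\pair{}}=[0,1]$) and the redundant $\max$ in $m_n$.
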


\begin{proof} Let $a$ be as in Lemma \ref{lem-bwwkl}. Define $b$ by
\begin{align} \label{eqn-b}
b0 &:= 0 \\ \notag
b(n+1) &:= \mu i\in (bn+1,\beta(bn+1)] \left( x_i\in I_{\initSeg{a}{bn+1}} \right).
\end{align}
By (\ref{eqn-bwwkl}) such least $i \in (bn+1,\beta(bn+1)]$ satisfying $x_i\in I_{\initSeg{a}{bn+1}}$ always exists. Clearly we have $bn<b(n+1)$, and also $x_{b(n+1)}\in I_{\initSeg{a}{bn+1}}\subseteq I_{\initSeg{a}{n+1}}$, since $bn+1\geq n+1$.
\end{proof}

\subsection{Lemma \ref{lem-monotoneskolem} via the product of selection functions}
\label{subsec-bwac}

We first interpret our main instance of countable choice (Lemma \ref{lem-monotoneskolem}). We want to produce an approximation to the function $\beta$ given counterexample functions $\omega,q\colon\NN^\NN\to\NN$ for $n$ and $k$ respectively in (\ref{eqn-monotoneskolem}):
\begin{equation} \label{eqn-ndmonotoneskolem}
\forall\omega,q \exists\beta\forall n\leq\omega\beta\forall s^{\BB^n}(T(s,q\beta)\to T(s,\beta n)).
\end{equation}
First we need to find selection functions $\delta_n \colon \NN^\NN \to \NN$ witnessing the functional interpretation of (\ref{eqn-acprem}):
\begin{equation}\label{eqn-ndacprem}
\exists\delta\forall n,p\forall s^{\BB^n}(T(s,p(\delta_np))\to T(s,\delta_np)).
\end{equation}
Since (\ref{eqn-acprem}) is just the drinkers paradox combined with bounded collection, appropriate selection functions are constructed in a similar manner to (\ref{eqn-dpeps}).

\begin{lemma}\label{lem-acprem}Let $\delta_n$ be defined as
\begin{equation} \label{eqn-delta}
\delta_n p := p^i(0)
\end{equation}
where $i$ is the least $\leq 2^n$ such that, for all $s^{\BB^n}$, $\mt(s, p^{i+1}(0)) \to \mt(s, p^i(0))$. Then $\delta$ witnesses (\ref{eqn-ndacprem}).
\end{lemma}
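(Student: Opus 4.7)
The plan is to verify the lemma directly: with the given definition of $\delta_n$, we must check that for all $n$, $p$ and $s \in \BB^n$, the implication $T(s, p(\delta_n p)) \to T(s, \delta_n p)$ holds, which is (\ref{eqn-ndacprem}). Once the least $i \leq 2^n$ prescribed in (\ref{eqn-delta}) is known to exist, this is immediate: by construction $\delta_n p = p^i(0)$ and $p(\delta_n p) = p^{i+1}(0)$, so the required implication is precisely the defining property of $i$.

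The real content therefore lies in exhibiting such an $i \leq 2^n$. The key observation is that $T$ is monotone in its second argument, i.e.\ $T(s, k) \wedge k \leq k' \to T(s, k')$, which is immediate from the definition (\ref{tree-def}). Assuming without loss of generality that $p$ is strictly increasing (replacing $p$ by $\tilde p(k) := \max(k+1, p(k))$ if necessary, which does not affect the interpretation we are realizing), the iterates $p^0(0) < p^1(0) < \cdots$ are strictly increasing, so by monotonicity of $T$ the sequence of subsets
\[ W_i := \{ s \in \BB^n : T(s, p^i(0)) \} \subseteq \BB^n \]
forms an ascending chain $W_0 \subseteq W_1 \subseteq \cdots \subseteq W_{2^n}$ in $\mathcal{P}(\BB^n)$.

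Since $|\BB^n| = 2^n$, a strictly ascending chain in $\mathcal{P}(\BB^n)$ has length at most $2^n$, so among the $2^n + 1$ sets $W_0, \ldots, W_{2^n}$ some consecutive pair must coincide: there exists $i < 2^n$ with $W_i = W_{i+1}$. For such $i$, $W_{i+1} \subseteq W_i$ gives $T(s, p^{i+1}(0)) \to T(s, p^i(0))$ for every $s \in \BB^n$, which shows that the minimum in (\ref{eqn-delta}) exists and is bounded by $2^n$. The two halves then combine to witness (\ref{eqn-ndacprem}).

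The main obstacle is the justification of the reduction to a monotone $p$: formally $\delta_n$ is applied to arbitrary $p$, but the functional interpretation only requires correctness on the relevant majorants, and one can equivalently build the monotonisation into the definition of $\delta_n$ by iterating $k \mapsto \max(k+1, p(k))$ instead of $p$ itself. Beyond this, the argument is a straightforward pigeonhole on a chain of length $2^n + 1$ in a poset of width $2^n$.
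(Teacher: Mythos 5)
Your reduction to a pigeonhole on the chain $W_0 \subseteq \cdots \subseteq W_{2^n}$ is exactly the combinatorial heart of the paper's proof ($2^n+1$ indices against only $2^n$ sequences $s \in \BB^n$), and your first paragraph correctly isolates why the lemma is immediate once the least $i$ is known to exist. The gap is the step you yourself flag: ``without loss of generality $p$ is strictly increasing.'' The statement (\ref{eqn-ndacprem}) is the plain dialectica interpretation, universally quantified over \emph{all} $p$, not its monotone variant, so the appeal to ``correctness on the relevant majorants'' does not apply here; and the alternative repair of iterating $\tilde p(k) := \max(k+1, p(k))$ changes the definition of $\delta_n$ from the one in (\ref{eqn-delta}), so as written you prove a variant of the lemma rather than the lemma itself. (That variant does still witness (\ref{eqn-ndacprem}), but only after a step you omit: the defining property of $i$ now gives $T(s, \tilde p(\delta_n p)) \to T(s, \delta_n p)$, and since $p(k) \le \tilde p(k)$ you must invoke monotonicity of $T$ in its second argument once more to pass from $T(s, p(\delta_n p))$ to $T(s, \tilde p(\delta_n p))$.)

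The observation that makes the WLOG unnecessary --- and which is how the paper argues --- is that increasingness of the iterates is automatic at every index where the defining condition of $i$ fails: if $p^{i+1}(0) \le p^{i}(0)$ for some $i \le 2^n$, then monotonicity of $T$ in its second argument already yields $T(s, p^{i+1}(0)) \to T(s, p^{i}(0))$ for all $s$, so that $i$ witnesses the bounded search directly. Hence in the only remaining case one has $p^{i}(0) < p^{i+1}(0)$ for all $i \le 2^n$, and your chain-plus-pigeonhole argument goes through verbatim for the original $p$ and the original $\delta_n$. The paper packages this as a proof by contradiction (its conditions (I) and (II), where (II) is precisely the strict increase of the iterates derived from the failure of the search), but the content is the same case split: either some consecutive pair of iterates fails to increase, in which case you are done trivially, or all of them increase, in which case your $W_i$ argument applies.
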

\begin{proof} Note that (\ref{eqn-ndacprem}) holds by definition once we show that such $i \leq 2^n$ must exist. Assume, for the sake of a contradiction, that
\begin{itemize}
	\item[(I)] for all $i \leq 2^n$ there exists an $s^{\BB^n}$ such that $\mt(s, p^{i+1}(0))$ and $\neg \mt(s, p^i(0))$.
\end{itemize}
Because $\mt(s, k)$ is monotone on the second argument, (I) implies that
\begin{itemize}
	\item[(II)] $p^i(0) < p(p^i(0))$, for all $0 \leq i \leq 2^n$.
\end{itemize}
Since, in (I), we have $2^n +1$ possible values for $i$ but only $2^n$ possible values for $s$, there must be an $s$ and distinct $i$ and $j$, say $i < i+1 \leq j$, such that $\mt(s, p^{i+1}(0))$ and $\neg \mt(s, p^j(0))$. By (II), however, that is a contradiction.
\end{proof}

\begin{theorem} \label{thm-ndmonotoneskolem} Given $q \colon \NN^\NN \to \NN$ and $\omega \colon \NN^\NN \to \NN$ let $\beta \colon \NN^\NN$ be defined as
\[ \beta:=\Prod{\pair{}}{\omega}{\delta}(q), \]
with $\delta$ as in (\ref{eqn-delta}). Then $\beta$ witnesses (\ref{eqn-ndmonotoneskolem}).
\end{theorem}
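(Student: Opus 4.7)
The plan is to combine Lemma~\ref{lem-acprem}, which gives pointwise selection functions interpreting each instance of the drinkers paradox that underlies (\ref{eqn-acprem}), with the Main Theorem on $\Prods$ (Theorem~\ref{eps-main}), which glues these pointwise witnesses into a single sequence realizing the countable-choice statement. In the game-theoretic reading of Section~\ref{choice-section}, the selection functions $\delta_n$ are the pointwise strategies interpreting the premise of $\AC$, and $\beta = \Prod{\pair{}}{\omega}{\delta}(q)$ is the co-operative strategic play that interprets the conclusion.

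First I would unfold what Theorem~\ref{eps-main} delivers in our setting. With $X = \NN$, $R = \NN$, and selection functions $\delta_s := \delta_{|s|}$ (the simple product), defining the auxiliary functions
\[ p_s(x) \;:=\; \selEmb{\Prod{s * x}{\omega}{\delta}}(q_{s*x}), \]
the theorem guarantees that, for every $n \leq \omega\beta$,
\[ \beta(n) \;=\; \delta_n(p_{\initSeg{\beta}{n}}) \qquad\text{and}\qquad q\beta \;=\; p_{\initSeg{\beta}{n}}\bigl(\beta(n)\bigr). \]
Writing $p := p_{\initSeg{\beta}{n}}$ for short, the two equalities say $\beta(n) = \delta_n p$ and $q\beta = p(\delta_n p)$.

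Now I would invoke Lemma~\ref{lem-acprem}: for every $n$ and every $p \colon \NN^\NN \to \NN$,
\[ \forall s^{\BB^n}\bigl(T(s, p(\delta_n p)) \to T(s, \delta_n p)\bigr). \]
Applying this to our $p = p_{\initSeg{\beta}{n}}$ and substituting the two identities above yields
\[ \forall s^{\BB^n}\bigl(T(s, q\beta) \to T(s, \beta(n))\bigr), \]
which is exactly the matrix of (\ref{eqn-ndmonotoneskolem}) at index $n$. Since the identities from Theorem~\ref{eps-main} are valid for all $n \leq \omega\beta$, we conclude that $\beta$ witnesses (\ref{eqn-ndmonotoneskolem}).

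There is no real obstacle here beyond bookkeeping: the whole argument is a matter of specialising the abstract equilibrium equations (\ref{eqn-equilibrium}) of Theorem~\ref{eps-main} to the concrete selection functions $\delta_n$ and then quoting Lemma~\ref{lem-acprem}. The only point where care is needed is to make sure that the auxiliary functional $p_{\initSeg{\beta}{n}}$ lies in the scope of Lemma~\ref{lem-acprem}, which is immediate because the lemma quantifies universally over all $p \colon \NN^\NN \to \NN$.
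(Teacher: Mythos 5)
Your proposal is correct and follows the paper's own proof essentially verbatim: instantiate the equilibrium equations of Theorem~\ref{eps-main} at $p = p_{\initSeg{\beta}{n}}$ for $n \leq \omega\beta$ and then apply Lemma~\ref{lem-acprem}. The only blemish is a type slip where you write $p \colon \NN^\NN \to \NN$; the local outcome functions $p_s$ fed to $\delta_n$ have type $\NN \to \NN$.
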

\begin{proof} By Lemma \ref{lem-acprem}, the $\delta_n$ as defined in (\ref{eqn-delta}) are such that
\[ \forall n,p\forall s^{\BB^n}(T(s,p(\delta_np))\to T(s,\delta_np)). \]
For $n \leq \omega \beta$ and $p = p_{\initSeg{\beta}{n}}$, by Theorem \ref{eps-main} we have $\beta n = \delta_n p$ and $q \beta = p(\delta_n p)$, from which we can conclude $\forall s^{\BB^n}(T(s,q\beta)\to T(s,\beta n))$.
\end{proof}

\subsection{A realizer for $\BW(x_i)$}
\label{subsec-bwnd}

Finally, we show how the instance of the product of selection functions in Theorem \ref{thm-ndmonotoneskolem}, used to interpret the crucial Lemma \ref{lem-monotoneskolem}, lies behind an algorithm for constructing approximations to $\BW$. We first interpret Lemma \ref{lem-bwwkl}, making use of Howard's realizer for $\WKL$ using a weak, binary form of bar recursion, full details of which can be found in \cite{Howard(81A)}.

\begin{lemma}\label{lem-ndbwwkl} For any counterexample function $\varphi\colon\BB^\NN\times \NN^\NN\to\NN$ there exist $\beta \colon \NN^\NN$ and $A \colon \BB^\NN$ satisfying
\begin{equation} \label{eqn-ndbwwkl}
\forall n \leq \varphi A \beta \underbrace{\left( n < \beta n \wedge \exists i \in (n,\beta n] \left( x_i \in I_{\initSeg{A}{n}} \right)\right)}_{T(\initSeg{A}{n},\beta n)}.
\end{equation}
\end{lemma}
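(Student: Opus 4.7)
The plan is to interpret the proof of Lemma \ref{lem-bwwkl} by composing the realizer from Theorem \ref{thm-ndmonotoneskolem} (which handles the use of countable choice via Lemma \ref{lem-monotoneskolem}) with Howard's bar-recursive realizer of $\WKL$. Abstractly, Howard's algorithm takes a decidable tree $T$ together with a functional supplying witnesses of its infinitude and, given a counterexample $\psi \colon \BB^\NN \to \NN$, produces an $A \colon \BB^\NN$ such that $T(\initSeg{A}{n})$ holds for every $n \leq \psi A$. The principal issue to overcome is the mutual dependence: the tree $T^\beta$ and hence Howard's output $A$ depend on $\beta$, while $\beta$ is produced by Theorem \ref{thm-ndmonotoneskolem} from counterexamples $\omega, q$ that must in turn depend on $A$.

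Concretely, given $\varphi \colon \BB^\NN \times \NN^\NN \to \NN$, I would define for each $\beta \colon \NN^\NN$ the sequence $H(\beta) \colon \BB^\NN$ to be the output of Howard's realizer applied to the tree $T^\beta$ with counterexample $\lambda A . \varphi A \beta$, using the function $h$ from the proof of Lemma \ref{lem-bwwkl} (together with $\beta$ itself) as the infinitude witness. Then set
\[
\omega \beta := \varphi (H(\beta))\, \beta,
\qquad
q \beta := (\omega \beta) + 1,
\]
let $\beta := \Prod{\pair{}}{\omega}{\delta}(q)$ as in Theorem \ref{thm-ndmonotoneskolem}, and finally $A := H(\beta)$. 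Each evaluation of $\omega$ or $q$ only inspects a finite initial segment of its argument, so the recursion is well-defined, and the choice $q\beta = \omega\beta + 1$ is exactly what will let the monotone Skolem property exhibit a node of $T^\beta$ at every level $n \leq \omega\beta$.

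For correctness, Theorem \ref{thm-ndmonotoneskolem} supplies $\forall n \leq \omega\beta\,\forall s^{\BB^n}(T(s,q\beta)\to T(s,\beta n))$. For each such $n$, taking $s := h(n)$ gives $T(s,n+1)$ by construction of $h$, hence $T(s,q\beta)$ since $q\beta \geq n+1$, and therefore $T(s,\beta n)$; that is, $T^\beta$ is populated at level $n$. Thus $T^\beta$ has branches up to level $\omega\beta = \varphi A \beta$, so Howard's realizer succeeds and produces $A = H(\beta)$ satisfying $T(\initSeg{A}{n},\beta n)$ for all $n \leq \varphi A \beta$, which is exactly (\ref{eqn-ndbwwkl}). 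The main obstacle to handle carefully is precisely the circular definition above: formally one is layering the unbounded product on top of Howard's bar recursion, and one needs to verify that the resulting nested recursion is well-defined in the intended model of bar recursion.
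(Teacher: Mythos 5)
Your overall architecture is the same as the paper's: the unbounded product supplies $\beta$, the counterexample functions $\omega,q$ are manufactured from Howard's bar recursion applied to $\lambda A.\varphi A\beta$, $A$ is read off from Howard's construction, and the circularity you flag is resolved exactly as you propose. The gap is quantitative, in your choice of $q$, and it breaks the correctness argument. You set $q\beta=\varphi(H(\beta))\beta+1$ and verify only that the nodes $h(n)$ lie in $T^\beta$ for $n\leq\varphi(H(\beta))\beta$. But the initial segments $\initSeg{A}{n}$ of Howard's output are not the nodes $h(n)$: in Howard's construction $A$ is the canonical extension of a prefix $t$ of $h(N^\beta(\pair{}))$, where $N^\beta(\pair{})$ is the depth of the bar recursion, and the only witness one can actually exhibit for $\exists i\,(x_i\in I_{\initSeg{A}{n}})$ comes from $T(h(N^\beta(\pair{})),N^\beta(\pair{})+1)$, i.e.\ sits at an index around $N^\beta(\pair{})$, which in general vastly exceeds $\varphi A\beta+1$. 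Hence $T(\initSeg{A}{n},q\beta)$ cannot be established with your $q$, and the approximate Skolem property from Theorem \ref{thm-ndmonotoneskolem} cannot be invoked to pull that witness down below $\beta n$. Put differently, the hypothesis Howard's realizer needs discharged is that the infinitude witness produces a node of $T^\beta$ at the single deep level $N^\beta(\pair{})$ it queries, not at the levels $n\leq\varphi A\beta$; your argument discharges the wrong hypothesis.

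The repair is the paper's choice $\omega\beta=q\beta:=N^\beta(\pair{})+1$. Then for $n\leq\varphi A\beta<|t|\leq N^\beta(\pair{})<\omega\beta$ one has $\initSeg{A}{n}\preceq h(N^\beta(\pair{}))$, so prefix-closure of $T(\cdot,k)$ at the fixed bound $k=N^\beta(\pair{})+1$ yields $T(\initSeg{A}{n},q\beta)$, and the approximation supplied by the product applies at every such $n$ to give $T(\initSeg{A}{n},\beta n)$. The moral is that the counterexample functions fed to the product must already incorporate the full depth of Howard's bar recursion, not merely the value of $\varphi$ at the final output; your observation that the tree is populated by $h(n)$ at levels $n\leq\omega\beta$ is correct but is not the fact the composition needs.
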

\begin{proof} Assume $\varphi \colon\BB^\NN\times \NN^\NN\to\NN$ given. For any given $\beta \colon \NN^\NN$ let $N^\beta \colon \BB^\ast\to\NN$ be defined via Howard's binary bar recursion as
\begin{equation*} \label{eqn-howbar}
N^\beta(t) :=
\left\{
\begin{array}{ll}
0 & \mbox{if $\exists s\preceq t  \left( \varphi(\hat s, \beta) <|s| \right)$} \\[2mm]
1+\max\{N^\beta(t\ast 0),N^\beta(t\ast 1)\} & \mbox{otherwise}.
\end{array}
\right. 
\end{equation*}
We first construct $\beta$ as in Theorem \ref{thm-ndmonotoneskolem} where we set
\begin{equation*}
q \beta = \omega \beta := N^\beta(\pair{})+1,
\end{equation*}
to obtain
\begin{equation} \label{eqn-ndmonotoneskolem0}
\forall n \leq N^\beta(\pair{}) \; \forall s^{\BB^n}(T(s,N^\beta(\pair{})+1)\to T(s,\beta n)).
\end{equation}
Let $N = N^\beta$ for $\beta$ as just defined. It can be shown (cf. Howard \cite{Howard(81B)}) that for any $|t|\geq N(\pair{})$ there is some $s\preceq t$ with $\varphi(\hat s, \beta) <|s|$. Therefore we define 
\begin{equation*}
A := \hat t, \quad \quad \mbox{ where }t=\mu s\preceq h(N(\pair{})) \left( \varphi(\hat s, \beta) <|s| \right)
\end{equation*}
where $h$ is defined as in the proof of Lemma \ref{lem-bwwkl}. Now, by the definition of $h$ we have $T(h(N(\pair{})), N(\pair{})+1)$. Also, for $n \leq \varphi(A, \beta)$, by the definition of $t$ (in definition of $A$) we have $n < |t|$, and hence $\initSeg{A}{n} \preceq h(N(\pair{}))$. Therefore, by the definition of $T(s, k)$ we also have $T(\initSeg{A}{n},N(\pair{})+1)$. Finally, by (\ref{eqn-ndmonotoneskolem0}) we get $T(\initSeg{A}{n},\beta n)$, and so we have proved (\ref{eqn-ndbwwkl}).
\end{proof}

We are in a position now to effectively witness an approximation to the real Bolzano-Weierstrass (Theorem \ref{thm-bw}). 

\begin{theorem} \label{thm-ndbw} For any counterexample function $\psi\colon\BB^\NN\times\NN^\NN\to\NN$ there exists $A \colon \BB^\NN$ and $B \colon \NN^\NN$ satisfying
\[ \underbrace{\forall n \leq \psi AB (Bn < B(n+1)\wedge x_{B n}\in I_{\initSeg{A}{n}})}_{\nd{\BW(x_i)}{A,B}{\psi}}. \]
\end{theorem}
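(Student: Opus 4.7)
The plan is to mirror the classical derivation of Theorem \ref{thm-bw}, constructing $B$ from the approximations $A,\beta$ supplied by Lemma \ref{lem-ndbwwkl}, with the counterexample function $\varphi$ passed to that lemma calibrated so that the bounded $\mu$-search defining $B$ remains valid up to depth $\psi AB$. Since the classical definition of $b$ in (\ref{eqn-b}) is a straightforward primitive recursion over $a$ and $\beta$, no new bar recursion is needed at this outer layer; all the non-trivial computational content is already packaged inside Lemma \ref{lem-ndbwwkl} via the product of selection functions of Theorem \ref{thm-ndmonotoneskolem}.

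Concretely, uniformly in arbitrary $A \colon \BB^\NN$ and $\beta \colon \NN^\NN$, I would first define a functional $B_{A,\beta} \colon \NN^\NN$ by a primitive recursion mirroring (\ref{eqn-b}):
\begin{align*}
B_{A,\beta}(0) &:= 0, \\
B_{A,\beta}(n+1) &:= \mu i \leq \beta(B_{A,\beta}(n)+1) \; . \; \bigl( i > B_{A,\beta}(n)+1 \,\wedge\, x_i \in I_{\initSeg{A}{B_{A,\beta}(n)+1}} \bigr),
\end{align*}
with the bounded search returning its upper bound as a default if no witness exists, so that $B_{A,\beta}$ is totally defined. Now set
\[ \varphi A \beta := B_{A,\beta}(\psi\, A\, B_{A,\beta}) + 1, \]
a well-defined functional of type $\BB^\NN \times \NN^\NN \to \NN$. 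Apply Lemma \ref{lem-ndbwwkl} to this $\varphi$ to obtain $A,\beta$ satisfying $T(\initSeg{A}{n},\beta n)$ for all $n \leq \varphi A\beta$, and finally put $B := B_{A,\beta}$. The verification proceeds by induction on $n \leq \psi AB$: strict monotonicity of $B$ gives $Bn+1 \leq B(\psi AB)+1 = \varphi A\beta$, so $T(\initSeg{A}{Bn+1},\beta(Bn+1))$ holds, which forces the bounded search at stage $n+1$ to return a genuine $i \in (Bn+1,\beta(Bn+1)]$ with $x_i \in I_{\initSeg{A}{Bn+1}}$. Hence $B(n+1) > Bn+1 > Bn$, and since $B(n-1)+1 \geq n$, the interval inclusion $I_{\initSeg{A}{B(n-1)+1}} \subseteq I_{\initSeg{A}{n}}$ yields $x_{Bn} \in I_{\initSeg{A}{n}}$.

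The main subtlety to address is the apparent circularity: $\varphi$ is defined in terms of $B_{A,\beta}$, while the $A,\beta$ making the $\mu$-searches succeed are themselves produced by Lemma \ref{lem-ndbwwkl} using $\varphi$. This is resolved by observing that $B_{A,\beta}$ is totally defined on every input via the default in the bounded $\mu$-search, so $\varphi$ is a bona fide functional \emph{before} the lemma is invoked; only \emph{a posteriori}, once the lemma delivers its particular $A,\beta$, do we learn that the default is never actually needed up to the relevant depth. Beyond this bookkeeping the assembly is essentially mechanical, illustrating the point made in Section \ref{sec-main}: the extracted program inherits its game-theoretic character from the product of selection functions hidden inside the subsidiary lemmas, while the outer proof contributes only a routine recursion.
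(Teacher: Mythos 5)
Your overall architecture matches the paper's: define $B$ from $A,\beta$ by the recursion (\ref{eqn-b}), feed a suitably calibrated counterexample function $\varphi$ into Lemma \ref{lem-ndbwwkl}, and verify by induction up to $\psi AB$. But your specific choice $\varphi A\beta := B_{A,\beta}(\psi\, A\, B_{A,\beta})+1$ does not work, and the circularity you flag is not actually resolved by totalizing the $\mu$-search. Totalization only makes $\varphi$ a well-defined functional; the problem resurfaces in the verification. At stage $n+1$ you need $T(\initSeg{A}{Bn+1},\beta(Bn+1))$, which Lemma \ref{lem-ndbwwkl} supplies only when $Bn+1\leq\varphi A\beta=B(\psi AB)+1$, i.e.\ when $Bn\leq B(\psi AB)$. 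That inequality needs monotonicity of $B$ on all of $[n,\psi AB]$, which in turn needs the searches at the \emph{later} stages to succeed --- exactly what your forward induction has not yet established. Concretely: if the search first fails at some stage $n_0+1\leq\psi AB$, then $B(n_0+1)$ is the default value $\beta(B(n_0)+1)$ evaluated at an index lying beyond the range covered by the lemma, so it is completely uncontrolled (it may be $0$); then $B(\psi AB)$, and with it $\varphi A\beta$, can collapse below $B(n_0)$, the lemma's guarantee never reaches the index $B(n_0)+1$, and the conclusion $Bn<B(n+1)$ genuinely fails. Nothing in the statement of Lemma \ref{lem-ndbwwkl} excludes the pair $(A,\beta)$ it returns from being of this pathological kind.

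The paper breaks the circle by bounding the indices visited by $B$ \emph{a priori}, independently of whether the searches succeed: setting $\tilde\beta(n):=\max_{i\leq n+1}\beta(i)$, one has $B_{A,\beta}(n+1)\leq\beta(B_{A,\beta}(n)+1)\leq\tilde\beta(B_{A,\beta}(n))$ unconditionally (the default of the bounded search is its upper endpoint), hence $B_{A,\beta}(n)\leq\tilde\beta^{\,n}(0)$ by induction using the monotonicity of $\tilde\beta$. Taking $\varphi(A,\beta):=\tilde\beta^{\,\psi(A,\,b_{A,\beta})}(0)$ then ensures that every index $Bn+1$ with $n\leq\psi AB$ falls within the range where (\ref{eqn-ndbwwkl}) applies, after which your verification goes through essentially verbatim. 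So the repair is local --- replace your $\varphi$ by an iterated monotone majorant of $\beta$ --- but as written the step ``$Bn+1\leq B(\psi AB)+1$'' is a genuine gap.
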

\begin{proof} Let $b_{a, \beta}$ denote the construction in (\ref{eqn-b}), and let 
\[ \tilde\beta(n):=\max_{i\leq n+1}\beta(i). \]
Define
\[ \varphi(A, \beta) := \tilde\beta^{\psi(A, b_{A,\beta})}(0). \]
Construct $A$ and $\beta$ as in Lemma \ref{lem-ndbwwkl} using counterexample function $\varphi$ as above defined, and let $B := b_{A,\beta}$. By (\ref{eqn-ndbwwkl}) we have
\begin{equation} \label{final-approx}
\forall n \leq \tilde\beta^{\psi A B}(0) \left( n < \beta n \wedge \exists i \in (n,\beta n] \left( x_i \in I_{\initSeg{A}{n}} \right)\right).
\end{equation}
This is an approximation of Lemma \ref{lem-bwwkl}, but it is enough to obtain our desired approximation of Theorem \ref{thm-bw}. Indeed, for all $n \leq \psi A B$ we are guaranteed to have, by (\ref{final-approx}), that $B n < B(n+1)$ and $x_{B n} \in I_{\initSeg{A}{n}}$.
\end{proof}

\subsection{Understanding the realizer for $\BW(x_i)$}
\label{subsec-bwunderstand}

Let us first take an informal look at the game that forms the basis of the interpretation of countable choice given in Section \ref{subsec-bwac}. The strategy at round $n$ is to pick a number $m$ satisfying
\begin{equation*}
\exists j\in (n,p(m)] \left( x_j\in I_s \right) \to \exists j\in (n,m] \left( x_j\in I_s \right)
\end{equation*}
for all $s \colon \BB^n$. In other words, given a local outcome function $p \colon \NN \to \NN$, the selection function picks a number $m = \delta_n p$ such that whenever the interval $I_s$ contains some $x_j$ where $j$ in bounded by the outcome $p(m)$ of playing $m$, then $I_s$ also contains some $x_j$ with $j$ bounded by $m$.

The selection function $\delta_n$ prescribes what is essentially an iterated version of the strategy given for the drinkers paradox (\ref{eqn-dpeps}). It first attempts to play $m=0$ provided that $\forall s^{\BB^n} \forall j\in (n,p(0)] \left( x_j\notin I_s \right)$. But this cannot be the case unless $p(0) \leq n$. Hence, if $p(0) > n$, there exists an $s$ and a $j \in (n, p(0)]$ with $x_j \in I_s$. It then continues and attempts to play $p(0)$ in the hope that no additional intervals $I_s$ contain $x_j$ for $j\in (p(0),p^2(0)]$. Continuing along these lines, it is not difficult to see that $m=p_n^k(0)$ must work for some $k\leq 2^n$ since there are only $2^n$ intervals $I_s$, with $|s|$.

The resulting optimal strategy in the game $(\delta,\omega,q)$ is a sequence $\beta$ that acts as an approximation to the function ineffectively constructed in Lemma \ref{lem-monotoneskolem}:
\begin{equation*}
\forall n \leq \omega \beta \; \forall s^{\BB^n}(\exists k\in (n,q\beta] \left( x_j \in I_s \right) \to \exists j \in (n,\beta n] \left( x_j\in I_s \right) ).
\end{equation*}
The power of this procedure is evident when we observe that by constructing the outcome function $q$ and control function $\omega$ as in Section \ref{subsec-bwnd} (incorporating Howard's realizer for $\WKL$), the resulting optimal play $\beta$ can be directly used to construct approximations $A$ and $B$ for $\BW(x_i)$.

Our aim here has been to convince the reader that while constructing a realizer for the functional interpretation of the Bolzano-Weierstrass theorem takes a reasonable amount of work, the game theoretic intuition behind the product of selection functions allows us to gain a better understanding of the key operational features of this realizer.

\section{Further remarks}
\label{sec-further}

In this article we have shown that the language of selection functions and sequential games underlies the dialectica interpretation of classical proofs in a fundamental way, and we have used the product of selection functions to construct a concise and intuitive computational interpretation of some well-known theorems.

Our motivation has been a more qualitative understanding of functional interpretations, as a response the fact that formal proof-theoretic methods are becoming increasingly relevant in modern mathematics. We have shown that the product of selection functions is a fundamental construction behind the dialectica interpretation of classical proofs, and we hope to have convinced the reader that in practise it leads to extracted programs that have an expressive reading in terms of optimal strategies in sequential games.

There is a lot of work to be done towards understanding formal proof-theoretic techniques in mathematical terms, and the question of adapting and refining functional interpretations so that they can be seen as intelligent translations on mathematical proofs as opposed to just syntactic translations on logic sentences forms a very interesting area of research. The authors believe that there are several potentially fruitful avenues for further research.

One is to explore in more detail the link between the dialectica interpretation and the closely related `correspondence principle' implicitly used in areas like ergodic theory. In particular, the finitary version of theorems discussed by Tao in \cite{Tao(2007),Tao(2008)} are strictly speaking related to the \emph{monotone} variant of the dialectica interpretation, which extracts uniform bounds, or majorants, for realizers of the interpretation. It would be interesting to try to gain an understanding of how the product of selection functions can be used to extract realizers for the monotone interpretation and therefore produce constructive proofs of theorems that can truly be seen as `finitizations' in the sense of Tao.

Another interesting issue is the \emph{efficiency} of the product of selection functions in producing a realizer. For instance, a quick analysis of Example \ref{ex-noinj} shows that if $\Psi(\lambda n.0)=\Psi(\lambda n.1)$, our program potentially misses this obvious counterexample and eventually produces a much more elaborate one. This highlights the fact that while the product is indeed an intuitive realizer for the axiom of choice, it is far from optimal and refinements of the procedure or even completely different recursion schemata may be more suited to interpreting specific principles. An example of this is open recursion proposed by Berger in \cite{Berger(2004)} for the realizability interpretation of the minimal bad sequence argument. 

A related question is the efficiency of the dialectica interpretation itself, and the comparison of extracted programs to those obtained using other proof interpretations such as modified realizability. In particular, the modified realizability interpretation of choice has an interesting realizer given by Berardi et al. \cite{Berardi(98)} that was also shown to have a natural game theoretic reading.

We conclude with the remark that our paper belongs to a larger body of recent work on the theory of selection functions and sequential games by the first author and M. Escard{\'o}, starting with \cite{EO(2009)} and surveyed in \cite{EO(2011A)}. Particularly relevant is \cite{OP(2011B)}, in which the product of selection functions is used to extract a game theoretic realizer for Ramsey's theorem. This can be seen as an extended case study illustrating the methods employed here.

\bibliographystyle{plain}

\bibliography{dblogic}

\end{document}